\documentclass[10pt]{article}

\usepackage[a4paper,margin=1in]{geometry}
\usepackage[T1]{fontenc}
\usepackage[utf8]{inputenc}
\usepackage{amsmath,amssymb,amsthm,mathtools}
\usepackage{enumitem}
\usepackage{microtype}
\usepackage{tikz}
\usepackage{float}
\usepackage[dvipsnames]{xcolor}
\usepackage[colorlinks=true,linkcolor=blue,citecolor=blue,urlcolor=blue]{hyperref}

\numberwithin{equation}{section}

\newtheorem{theorem}{Theorem}[section]
\newtheorem{lemma}[theorem]{Lemma}
\newtheorem{conjecture}[theorem]{Conjecture}
\newtheorem{proposition}[theorem]{Proposition}

\newtheorem{claim}[theorem]{Claim}
\theoremstyle{definition}

\theoremstyle{remark}
\newtheorem{remark}[theorem]{Remark}

\newcommand{\La}{\lambda}
\newcommand{\ex}{\operatorname{ex}}
\newcommand{\cone}{\operatorname{cone}}
\newcommand{\Sint}{\mathbf{S}}

\title{\Large\bf The number $4/9$ is a non-jump for $3$-graphs}
\author{%
Xizhi Liu\thanks{School of Mathematical Sciences, University of Science and Technology of China, Hefei, China. Email:~\texttt{liuxizhi@ustc.edu.cn}.}
\and
Dhruv Mubayi\thanks{Department of Mathematics, Statistics and Computer Science, University of Illinois Chicago, Chicago, IL 60607, USA. Email:~\texttt{mubayi@uic.edu}.}}
\date{\today}

\begin{document}
\maketitle

\begin{abstract}
    We prove that $4/9$ is a non-jump for $3$-uniform hypergraphs. Our construction perturbs the $ABB$ pattern by inserting, inside the $B$-part, the union of a high-cogirth pair of Steiner triple systems.  This goes below the barrier for non-jumps obtainable by Shaw's finite-pattern formulation of the Frankl--R\"odl method introduced in 1984. All results employing this approach use patterns where one of the parts has complete shadow. As the $ABB$ pattern is the smallest one with this property, the value $4/9$ is the natural barrier using this technique, and we conjecture that $4/9$ is the smallest non-jump for $3$-graphs. If our conjecture is true, this would answer (in a very strong form) an old question of Erd\H os.    
\end{abstract}

\section{Introduction}\label{SEC:Introduction}

Given an integer $r\ge 2$, an $r$-uniform hypergraph, or simply an $r$-graph, is a collection of $r$-subsets of a vertex set.  We identify a hypergraph with its edge set and write $|H|$ for the number of its edges. We write $v(H)=|V(H)|$ for the number of vertices in $H$. Given a family $\mathcal F$ of $r$-graphs, let $\ex(n,\mathcal F)$ be the maximum number of edges in an $\mathcal F$-free $r$-graph on $n$ vertices. The \emph{Tur\'an density} of $\mathcal F$ is
\[
        \pi(\mathcal F)
        \coloneqq
        \lim_{n\to\infty}\frac{\ex(n,\mathcal F)}{\binom nr}.
\]
The existence of this limit follows from the averaging argument of Katona, Nemetz and Simonovits~\cite{KatonaNemetzSimonovits64}.  Let $\Pi^{(r)}_\infty$ denote the set of all Tur\'an densities of arbitrary, possibly infinite, families of $r$-graphs. As a convention, we assume that $\mathcal F$ is non-empty, and hence $1\notin \Pi^{(r)}_\infty$.

A number $\alpha\in[0,1)$ is called a \emph{jump} for $r$-graphs if there is a constant $c>0$ such that
\[
        \Pi^{(r)}_\infty\cap(\alpha,\alpha+c)=\emptyset.
\]
Otherwise $\alpha$ is called a \emph{non-jump}.  For graphs, the seminal Erd\H{o}s--Stone theorem~\cite{ErdosStone46} implies that $\Pi^{(2)}_\infty = \{1-1/k \colon k \in \mathbb{N},\ k \ge 2\} \cup \{0\}$, and thus, every $\alpha\in[0,1)$ is a jump.

The jump problem for hypergraphs goes back to a classical work of Erd\H{o}s~\cite{Erdos64}, and became one of his famous $\$1000$ problems. In~\cite{Erdos64}, Erd\H{o}s proved that $\Pi^{(r)}_\infty\cap(0,r!/r^r)=\emptyset$ for every $r\ge3$, and asked whether every number is a jump. Frankl and R\"odl~\cite{FranklRodl84} famously disproved this by proving the existence of non-jumps for every $r\ge3$. Further explicit gaps in $\Pi^{(3)}_\infty$ (that is, jumps) were found by Baber and Talbot~\cite{BaberTalbot11}, using Razborov's flag algebra method~\cite{Raz07} together with a criterion of Frankl and R\"odl.

The topological language of possible Tur\'an densities was developed by Pikhurko~\cite{Pikhurko14}; further results on gaps and on the algebraic and topological structure of possible densities appear in~\cite{BrownSimonovits84,Grosu16,Pikhurko15}.  Codegree and $\ell$-degree versions of Tur\'an density were studied by Mubayi and Zhao~\cite{MubayiZhao07} and by Lo and Markstr\"om~\cite{LoMarkstrom14}; in that setting, it was shown that the non-jumps form a dense set in $[0,1)$.  The multigraph analogue of the jumping constant conjecture was studied by R\"odl and Sidorenko~\cite{RodlSidorenko95}.  Further work on higher-uniformity non-jumps and jumping densities includes works~\cite{Peng07NonJumping4Uniform,Peng09JumpingDensities,Peng08LagrangiansI,Peng07LagrangiansII,HouLiYangZhang24}.  Algebraic aspects of Tur\'an densities were studied by Liu and Pikhurko~\cite{LiuPikhurko23}.  For general background on hypergraph Tur\'an problems, see the excellent survey of Keevash~\cite{Keevash11}.

Many extremal constructions for hypergraph Tur\'an problems are most naturally described by finite patterns, a notation introduced in~\cite{Pikhurko14}.  Informally, a $3$-uniform pattern records only how many vertices an edge takes from each part of a blow-up.  Thus an edge of type $ABB$ represents all triples with one vertex in the $A$-part and two vertices in the $B$-part.  The Lagrangian of a pattern is the maximum asymptotic density of its blow-ups, optimized over all choices of part sizes. Pikhurko~\cite{Pikhurko14} proved that the Lagrangian of every finite pattern is itself a Tur\'an density. For instance, the $ABB$ pattern has normalized Lagrangian
\[
        \max_{\substack{a+b=1\\ a,b\ge0}} 3ab^2=\frac49.
\]

As already shown in the classical work of Frankl and R\"odl~\cite{FranklRodl84}, Lagrangians are useful in jump/non-jump problems because they connect finite local objects with asymptotic densities.  If a finite hypergraph or pattern has Lagrangian greater than $\alpha$, then suitable blow-ups give arbitrarily large hypergraphs of density greater than $\alpha$.  Conversely, if every bounded local subgraph arising in a construction has Lagrangian at most $\alpha$, then no bounded subgraph witnesses density larger than $\alpha$.  The Frankl--R\"odl method exploits this tension: one inserts a sparse structure inside a blow-up part in order to raise the global Lagrangian, while not increasing the Lagrangian of small induced subhypergraphs.

Recently, Shaw~\cite{Shaw2025MinimalNonJumps} formulated the Frankl--R\"odl construction in a finite-pattern language.  In this formulation, one starts with a finite $r$-pattern $P$ and a distinguished vertex $v$, forms an auxiliary Frankl--R\"odl pattern $\mathrm{FR}_v(P)$, and proves the Lagrangian identity $\lambda(\mathrm{FR}_v(P))=\lambda(P)$. 
Very roughly, $\mathrm{FR}_v(P)$ records the worst local configuration created when a sparse $r$-graph is inserted into the part corresponding to $v$ in a large blow-up of $P$. Thus the identity above is the local Lagrangian condition which allows the inserted sparse hypergraph to raise the global density while keeping every bounded local subgraph at the original Lagrangian level.

Shaw proved that this finite-pattern version of the method cannot produce $3$-graph non-jumps below
\[
        \frac{6}{121}\left( 5\sqrt5-2 \right)=0.4552\ldots.
\]
In particular, it cannot reach $4/9$.  He also asked whether $\frac49=\frac{2\cdot 3!}{3^3}$ is a jump for $3$-graphs.  Further examples within the same pattern framework were obtained by Komorech~\cite{Komorech2025NonJumps}.

The value $4/9$ is a natural boundary for this circle of ideas. In all known finite-pattern implementations of the Frankl--R\"{o}dl method for $3$-graphs, the sparse object inserted inside one part is controlled by a repeated-part edge. After relabelling, this leads to the $ABB$ profile.
Since the pure $ABB$ pattern already has normalized Lagrangian $4/9$, the Frankl--R\"odl approach appears to have an intrinsic local barrier at this value: below $4/9$ there is no room for the necessary $ABB$ profile.  The present paper shows that this heuristic boundary is attainable.

\begin{theorem}\label{thm:main}
    The number $4/9$ is a non-jump for $3$-uniform hypergraphs.
\end{theorem}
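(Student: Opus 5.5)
The plan follows the Frankl--R\"odl criterion. A number $\alpha$ is a non-jump as soon as, for every $c>0$, one can find a family $\mathcal F$ with $\pi(\mathcal F)\in(\alpha,\alpha+c)$. The standard sufficient condition is as follows. Suppose that for every $t$ and every $\varepsilon>0$ there is a $3$-graph $G$ with Lagrangian density $3!\,\lambda(G)>\alpha+\delta(\varepsilon)$, where $\delta(\varepsilon)>0$ is small. Suppose also that every subgraph of $G$ on at most $t$ vertices has normalized Lagrangian at most $\alpha$. Then $\alpha$ is a non-jump. The reason is that the family of all $3$-graphs on at most $t$ vertices with Lagrangian exceeding $\alpha+\delta$ has Tur\'an density in $(\alpha,\alpha+c)$: blow-ups of $G$ show the density is above $\alpha$, while Erd\H{o}s's supersaturation-type argument caps it. So I would aim to construct, for every $t$, a $3$-graph $G_t$ with:
\begin{itemize}
\item[(i)] $3!\lambda(G_t)>4/9$;
\item[(ii)] every induced subgraph on at most $t$ vertices has $3!\lambda\le 4/9$.
\end{itemize}

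\textbf{Construction.} Fix a large $m\equiv 1,3\pmod 6$. Take the $ABB$ blow-up on $A=\{a\}$ (a single vertex suffices for Lagrangian purposes) and $B=[m]$; that is, all triples $aij$ with $i,j\in B$. Inside $B$, add the union $S_1\cup S_2$ of two edge-disjoint Steiner triple systems on $[m]$. The pair is chosen so that the union has high cogirth: every set of $s\le t$ vertices spans far fewer than the trivial number of triples of $S_1\cup S_2$. Concretely, no small configuration such as a Pasch or other dense local structure appears. Such pairs exist by random/absorption results on high-girth Steiner systems (Kwan--Sah--Sawhney--Simkin, Glock--K\"uhn--Lo--Osthus), which I would cite.

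\textbf{Step 1: Lagrangian increase.} Put weight $x_a=1/3$ and $x_i=2/(3m)$ on each $i\in B$. The $ABB$ part contributes essentially $3\cdot\frac13\cdot\frac49\approx 4/9$ (normalized), with a deficit of order $1/m$ from the missing diagonal. The inserted edges contribute $6\cdot|S_1\cup S_2|\cdot(2/(3m))^3=6\cdot\frac{m^2}{3}\cdot\frac{8}{27m^3}$, also of order $1/m$. I would compare these two $1/m$ terms. The point of using two Steiner systems, with codegree $2$ in $B$, is that the gain beats the loss $3\cdot\frac13\cdot(2/3)^2\cdot\frac1m$. After possibly reoptimizing $x_a$, this gives $3!\lambda(G_m)\ge 4/9+c_0/m$, which is enough since $m$ is fixed for each $t$.

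\textbf{Step 2: local bound, the main obstacle.} Let $H$ be a subgraph on at most $t$ vertices and let $x$ be an optimal weighting. If $a\notin V(H)$, then $H\subseteq S_1\cup S_2$ is sparse and locally forest-like by high cogirth, so its Lagrangian is tiny. If $a\in V(H)$, the link of $a$ is a complete graph on $B'=V(H)\setminus\{a\}$, and we must show
\[
3x_a\Big(\sum_{i\in B'}x_i\Big)^2-3x_a\sum_{i\in B'} x_i^2+6\sum_{ijk\in S_1\cup S_2[B']}x_ix_jx_k\le \tfrac{4}{9}.
\]
The idea is that the cubic term is dominated by the diagonal loss $x_a\sum x_i^2$. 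Because the two systems are linear (each pair lies in at most two triples) and locally acyclic, I would argue via the KKT conditions that an optimal $x$ on $B'$ is supported on a single edge or a small tree-like configuration. Then I would verify the inequality by a direct computation on these finitely many shapes, using the bound $x_ix_jx_k\le\frac13(x_i^2x_j+\dots)$ together with codegree at most $2$, and checking the extremal case of a single triple. This local inequality is exactly where the constant $4/9$ is tight. I expect it to be the delicate step: it needs a precise choice of the cogirth condition and a careful case analysis of small configurations in $S_1\cup S_2$. The criterion above then yields Theorem~\ref{thm:main}.
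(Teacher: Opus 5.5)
Your overall architecture matches the paper's: the criterion in the form (i)--(ii), the cone over $S_1\cup S_2$, and the weighting $\frac13,\frac2{3m}$, where the gain $\frac{16}{27m}$ beats the loss $\frac{12}{27m}$.

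\textbf{The gap is Step~2.} This step is the entire technical content of the paper (Theorem~\ref{thm:local-cone}). Your reduction (KKT forces an optimal $x$ onto a single edge or a small tree-like configuration, then check finitely many shapes) fails, for three reasons.

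\emph{The support does not shrink.} Support-shrinking needs a pair not covered by an edge, but the apex $a$ covers every pair of $B'$. Worse, the term $3x_a\bigl(b^2-\sum_i x_i^2\bigr)$ rewards spreading the weight. Already for $Q=\emptyset$ on $k$ vertices, the optimum is uniform on all of $B'$, with value $\frac49\bigl(1-\frac1k\bigr)$. So $4/9$ is approached only as the support grows, not at a single triple: the cone over one triple is $K_4^{(3)}$, with normalized Lagrangian $3/8$. Since $|B'|$ can be as large as $t-1$ for arbitrary $t$, you need a bound uniform in the support size, and no finite case check supplies it.

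\emph{The local structure is not tree-like.} Cogirth only gives $|S[U]|\le|U|-2$, roughly twice the hypertree count, and pairs lie in two triples.

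\emph{AM--GM plus $\Delta_2\le2$ cannot suffice.} These give $6q\le 2\sum_i x_i^2(b-x_i)$, hence the bound $3(1-b)b^2+(5b-3)\sum_i x_i^2-2\sum_i x_i^3$. At $b=\frac23$, $x_i=\frac2{3k}$, this equals $\frac49+\frac4{27k}-\frac{16}{27k^2}>\frac49$. For $k=m$ that is exactly the value your Step~1 weighting gives for $G$. Since $S_1\cup S_2$ itself has codegree $2$ and its cone has Lagrangian above $4/9$, any valid argument must use the edge bound $|Q[U]|\le|U|-2$ quantitatively, uniformly in $|U|$.

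\textbf{How the paper does it.} Writing $x_{v_0}=1-b$, $x_i=bz_i$ and optimizing over $b$ reduces the claim to $q\le\tau(\rho)$, where $\rho=\sum_i z_i^2$.
\begin{itemize}
\item \emph{Concentrated regime $\rho\ge\rho_0$.} Your support-reduction idea is valid for $q$ alone. A minimal-support maximizer has every pair covered by $Q$, so $\binom N2\le 3(N-2)$ forces $N\le4$, hence $q\le\frac1{27}$, which settles this regime.
\item \emph{Spread-out regime $\rho<\rho_0$.} KKT applied to $q-\tau(\rho)$ gives $\sum_i d_i>NA+2B$. On the other hand, Cauchy--Schwarz with $\sum m_{jk}^2\le 2\sum m_{jk}\le 6N-12$ gives $\sum_i d_i=\sum_{j<k}m_{jk}z_jz_k\le\rho\sqrt{3N-6}$. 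This is exactly where both sparsity and $\Delta_2\le2$ enter, and the two bounds are incompatible.
\end{itemize}

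\textbf{Citations.} Your references do not give the needed object. Kwan--Sah--Sawhney--Simkin and Glock--K\"uhn--Lo--Osthus concern a single high-girth system, and high girth of each $S_i$ says nothing about mixed configurations in the union. Pasch-freeness is also beside the point, since a Pasch is a $(6,4)$-configuration, which is allowed. What you need is a pair whose labelled union has no $(i+1,i)$-configuration for small $i$. That is the Delcourt--Postle high-cogirth theorem.
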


By a result of Peng~\cite{Peng08LagrangiansI}, non-jumps of the form $\alpha r!/r^r$ lift to all larger uniformities.  Thus Theorem~\ref{thm:main} implies that $2r!/r^r$ is a non-jump for every $r\ge4$, a conclusion also obtained by Shaw~\cite{Shaw2025MinimalNonJumps}.

Our construction starts from the $ABB$ pattern. Recall that an $ABB$-construction is a $3$-graph whose vertex set is split into two parts $A$ and $B$, and whose edges are all triples with one vertex in $A$ and two vertices in $B$.
This is also the extremal construction behind the $F_{3,2}$ ($3$-book with $3$-pages) Tur\'an density $4/9$; see~\cite{MubayiRodl02,FurediPikhurkoSimonovits03}.
To obtain a non-jump at $4/9$, we add triples inside the $B$-part.  The internal triple system must be dense enough to raise the global Lagrangian above $4/9$, but every bounded local piece of the resulting hypergraph must still have Lagrangian at most $4/9$.  We achieve this by using an internal triple system with maximum codegree at most two and high cogirth.  We obtain this from the union of two edge-disjoint Steiner triple systems, while the required local sparsity comes from the high-cogirth condition.  The existence of such high-cogirth pairs follows from a recent theorem of Delcourt and Postle~\cite[Theorem~3.3]{DelcourtPostle24}, extending breakthrough work on high-girth Steiner triple systems~\cite{BW19,GlockKuhnLoOsthus20,KwanSahSawhneySimkin24}.

The main new estimate is a local Lagrangian bound for cones.  Given a $3$-graph $Q$, let $\cone(Q)$ be the hypergraph obtained by adding a new vertex $v_0$, all triples $v_0uv$ with $u,v\in V(Q)$, and all edges of $Q$.  We prove that if $Q$ is sparse (which will be defined in Section~\ref{sec:prelim}) and has maximum codegree at most two, then the Lagrangian of $\cone(Q)$ is at most $4/9$. This cone estimate replaces, in our setting, the finite-pattern identity $\lambda(\mathrm{FR}_v(P))=\lambda(P)$.  It allows the whole construction to go below Shaw's finite-pattern barrier.

We end with the following admittedly bold conjecture about 3-graphs that would imply a solution to Erd\H{o}s' \$1000 question about 2/9 being a jump and much more.

\begin{conjecture} \label{conj:4/9}
   All numbers in $[0,4/9)$ are jumps and all numbers in $[4/9, 1)$ are non-jumps.
   \end{conjecture}
It is worth noting that if the first part of Conjecture~\ref{conj:4/9} is true, then Theorem~\ref{thm:main} would show that $4/9$ is the smallest non-jump, thus marking the end of a long line of research obtaining successively smaller non-jumps for 3-graphs. Figure 1 below indicates the current state of the art (cyan represents jumps and red represents non-jumps). See the concluding remarks section for more details on this figure.

\begin{figure}[H]
\centering
\begin{tikzpicture}[x=11.5cm,y=1cm,>=latex]
\colorlet{jumpcolor}{cyan!80!black}
\colorlet{nonjumpcolor}{red!80!black}
\def\dotr{1pt}
\def\intervallw{3pt}
\def\figlabelsize{\tiny}
\def\upperlabely{0.62}
\def\lowerlabely{-0.48}

\pgfmathsetmacro{\erdos}{2/9}
\pgfmathsetmacro{\our}{4/9}
\pgfmathsetmacro{\shaw}{6*(5*sqrt(5)-2)/121}
\pgfmathsetmacro{\fprt}{5/9}
\pgfmathsetmacro{\pengacc}{7/12}
\pgfmathsetmacro{\ypbase}{12/25}
\pgfmathsetmacro{\komthreeq}{3/4}
\pgfmathsetmacro{\komsixtyfour}{64/81}
\pgfmathsetmacro{\btoneleft}{0.2299}
\pgfmathsetmacro{\btoneright}{0.2316}
\pgfmathsetmacro{\bttwoleft}{0.2871}
\pgfmathsetmacro{\bttworight}{8/27}
\pgfmathsetmacro{\lp}{0.95}

\draw[gray!25,line width=\intervallw] (\erdos,0) -- (\our,0);
\draw[gray!25,line width=\intervallw] (\our,0) -- (\lp,0);
\draw[red!70!black,line width=\intervallw] (\lp,0) -- (1,0);

\draw[thick,->] (0,0) -- (1.03,0);

\draw[jumpcolor,line width=\intervallw] (0,0) -- (\erdos,0);
\draw[jumpcolor,line width=\intervallw] (\btoneleft,0) -- (\btoneright,0);
\draw[jumpcolor,line width=\intervallw] (\bttwoleft,0) -- (\bttworight,0);

\foreach \x/\lab in {
  0/{0},
  \erdos/{$\frac{2}{9}$},
  1/{1}
}{
  \draw (\x,0.03) -- (\x,-0.03);
  \node[below=3pt] at (\x,-0.03) {\figlabelsize \lab};
}

\node[jumpcolor,align=center] at (0.310,-0.74)
  {\figlabelsize $[0.2299,0.2316)$, $[0.2871,8/27)$};
\draw[jumpcolor,->] (0.258,-0.56) -- ({(\btoneleft+\btoneright)/2},-0.09);
\draw[jumpcolor,->] (0.362,-0.56) -- ({(\bttwoleft+\bttworight)/2},-0.09);

\fill[nonjumpcolor] (\our,0) circle (\dotr);
\node[align=center,text=nonjumpcolor] at (0.395,\upperlabely)
  {\figlabelsize $\frac{4}{9}$};
\draw[nonjumpcolor,->] (0.412,0.46) -- (\our,0.08);

\fill[nonjumpcolor] (\shaw,0) circle (\dotr);
\node[align=center,text=nonjumpcolor] at (0.540,\upperlabely)
  {\figlabelsize $\frac{6}{121}(5\sqrt5-2)$};
\draw[nonjumpcolor,->] (0.515,0.46) -- (\shaw,0.08);

\fill[nonjumpcolor] (\fprt,0) circle (\dotr);
\node[align=center,text=nonjumpcolor] at (0.605,\lowerlabely)
  {\figlabelsize $\frac{5}{9}$};
\draw[nonjumpcolor,->] (0.596,-0.32) -- (\fprt,-0.08);
\foreach \fprtl in {15,16,18,20,25,35}{
  \pgfmathsetmacro{\fprtpoint}{1 - 3/\fprtl + 5/(\fprtl*\fprtl)}
  \fill[nonjumpcolor] (\fprtpoint,0) circle (\dotr);
}

\fill[nonjumpcolor] (\pengacc,0) circle (\dotr);
\node[align=center,text=nonjumpcolor] at (0.660,\upperlabely)
  {\figlabelsize $\frac{7}{12}$};
\draw[nonjumpcolor,->] (0.638,0.46) -- (\pengacc,0.08);

\fill[nonjumpcolor] (\ypbase,0) circle (\dotr);
\node[align=center,text=nonjumpcolor] at (0.505,\lowerlabely)
  {\figlabelsize $\frac{12}{25}$};
\draw[nonjumpcolor,->] (0.500,-0.32) -- (\ypbase,-0.08);
\foreach \ypoint in {0.704204,0.768556,0.808429,0.835869,0.856038,0.871551,0.883890,0.893958,0.902343,0.909442,0.915537,0.920831,0.925474,0.929582,0.933243,0.936530,0.939496,0.942188,0.944642,0.946890,0.948956}{
  \fill[nonjumpcolor] (\ypoint,0) circle (\dotr);
}

\fill[nonjumpcolor] (\komthreeq,0) circle (\dotr);
\fill[nonjumpcolor] (\komsixtyfour,0) circle (\dotr);
\node[align=center,text=nonjumpcolor] at (0.760,\upperlabely)
  {\figlabelsize $\frac{3}{4},\ \frac{64}{81}$};
\draw[nonjumpcolor,->] (0.745,0.46) -- (\komthreeq,0.08);
\draw[nonjumpcolor,->] (0.782,0.46) -- (\komsixtyfour,0.08);

\node[align=center,text=nonjumpcolor] at (0.956,0.42)
  {\figlabelsize $[1-\delta,1)$};
\draw[nonjumpcolor,->] (0.956,0.26) -- (0.970,0.12);

\end{tikzpicture}
\caption{Current partial picture for jumps and non-jumps of $3$-graphs.}
\label{fig:numberline-jumps-nonjumps}
\end{figure}

The paper is organized as follows.  Section~\ref{sec:prelim} states the Lagrangian non-jump criterion and the local cone estimate.  Section~\ref{sec:local} proves the cone estimate.  Section~\ref{SEC:proof-main} constructs the required internal triple systems from high-cogirth Steiner triple systems and proves Theorem~\ref{thm:main}. Section~\ref{SEC:remark} contains remarks on the partial picture of known jumps and non-jumps.
The Appendix contains the proof of the standard non-jump criterion and a one-variable analytic lemma used in Section~\ref{sec:local}.

\section{Preliminaries}\label{sec:prelim}

For a finite $3$-graph $H$ and a vector $\mathbf{x}=(x_i)_{i\in V(H)}$, define the \emph{normalized Lagrangian polynomial} of $H$ by
\[
        p_H(\mathbf{x})
        \coloneqq 6\sum_{ijk\in H}x_ix_jx_k.
\]
The \emph{normalized Lagrangian} of $H$ is
\[
        \La(H)
        \coloneqq \max\Big\{p_H(\mathbf{x}):
        x_i\ge0\text{ for every }i\in V(H),\ \sum_{i\in V(H)}x_i=1 \Big\}.
\]
The normalization is chosen so that $\La(H)$ is the asymptotic edge density of optimal blow-ups of $H$, measured relative to $\binom n3$. This is the same normalization as the Lagrangian polynomial in the pattern/blow-up framework; see Motzkin--Straus~\cite{MotzkinStraus65}, Frankl--F\"uredi~\cite{FranklFuredi89}, and Pikhurko~\cite{Pikhurko14}.

We shall use the following standard Lagrangian form of the Frankl--R\"odl method. A proof is included in the Appendix.

\begin{lemma}\label{lem:FR-criterion}
Let $\alpha\in[0,1)$. Suppose that for every integer $m$ there exists a finite $3$-graph $G_m$ such that $\La(G_m)>\alpha$, but $\La(H)\le \alpha$ for every subgraph $H\subseteq G_m$ with $v(H)\le m$. Then $\alpha$ is a non-jump.
\end{lemma}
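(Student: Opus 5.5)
We argue by contradiction. Suppose $\alpha$ is a jump, so there is $c>0$ with $\Pi^{(3)}_\infty\cap(\alpha,\alpha+c)=\emptyset$. We will build a family of forbidden $3$-graphs whose Turán density lies in $(\alpha,\alpha+c)$.

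\emph{Step 1: choosing $t$.} Since $\La(G_m)>\alpha$ for each $m$, fix any $m_0$ and set $\varepsilon\coloneqq\min\{c/2,\ (\La(G_{m_0})-\alpha)/2\}>0$. We use the following standard compactness fact, which follows from Ramsey/supersaturation arguments in the Frankl--R\"odl paper and is reproved via Lagrangians. There is an integer $t=t(\alpha,\varepsilon)$ such that every sufficiently large $3$-graph of edge density at least $\alpha+\varepsilon$ contains a subgraph $H$ with $v(H)\le t$ and $\La(H)>\alpha+\varepsilon/2$.

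To see this, apply Erdős's supersaturation/blow-up argument. If $\pi(\mathcal F)>\alpha+\varepsilon/2$ were witnessed only by $3$-graphs all of whose $t$-vertex subgraphs have Lagrangian at most $\alpha+\varepsilon/2$, then averaging the density over $t$-vertex subsets gives a contradiction once $t$ is large. The reason is that the density of a $t$-vertex $3$-graph is at most its Lagrangian, up to an $O(1/t)$ correction.

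\emph{Step 2: the family $\mathcal F$.} Let $\mathcal F$ be the family of all $3$-graphs $F$ with $v(F)\le t$ and $\La(F)>\alpha$.

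For the lower bound, take $m\coloneqq t$. Every subgraph of $G_t$ on at most $t$ vertices has Lagrangian at most $\alpha$, so $G_t$ contains no member of $\mathcal F$. The same holds for every blow-up of $G_t$: a subgraph of a blow-up on at most $t$ vertices is a subgraph of a blow-up of some $H\subseteq G_t$ with $v(H)\le t$, and the Lagrangian of such a subgraph is at most $\La(H)\le\alpha$, because the Lagrangian of a blow-up equals that of the original graph. Blow-ups of $G_t$ have density tending to $\La(G_t)>\alpha$, hence $\pi(\mathcal F)\ge\La(G_t)>\alpha$.

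For the upper bound, every $\mathcal F$-free $3$-graph has all $t$-vertex subgraphs of Lagrangian at most $\alpha$. By Step 1 (run with $\alpha$ in place of $\alpha+\varepsilon/2$), its density is at most $\alpha+\varepsilon<\alpha+c$ for large $n$. Hence $\pi(\mathcal F)\in(\alpha,\alpha+c)$, contradicting the assumption that $\alpha$ is a jump.

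\emph{Main obstacle.} The delicate point is the quantitative version of Step 1. We need that every $3$-graph all of whose $t$-vertex subgraphs have Lagrangian at most $\alpha$ has density at most $\alpha+o_t(1)$. Frankl--R\"odl deduce this from the characterization that $\alpha$ is a jump iff there is a finite family with $\pi\le\alpha$ and Lagrangian blocking. Here we prove it directly. Fix an $n$-vertex $\mathcal F$-free $G$ and average over uniformly random $t$-subsets $T$. The expected density of $G[T]$ equals the density of $G$. Moreover, for any $t$-vertex $3$-graph, evaluating $p_{G[T]}$ at the uniform vector gives
\[
p_{G[T]}(\mathbf 1/t)=\frac{6|G[T]|}{t^3}=\frac{(t-1)(t-2)}{t^2}\cdot\frac{|G[T]|}{\binom t3}.
\]
Since $p_{G[T]}(\mathbf 1/t)\le\La(G[T])\le\alpha$, the density of $G[T]$ is at most $\alpha\,t^2/((t-1)(t-2))=\alpha+O(1/t)$. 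Choosing $t$ with this error below $\varepsilon$ completes the argument. This averaging is the step I would write out carefully; the rest is bookkeeping.
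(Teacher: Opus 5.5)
Your proof is correct, but it takes a different route from the paper's. The paper does not work with the definition $\Pi^{(3)}_\infty\cap(\alpha,\alpha+c)=\emptyset$ directly. It invokes Erd\H{o}s's local formulation of a jump, which it quotes as standard: every sufficiently large $3$-graph of density at least $\alpha+\varepsilon$ contains an $M$-vertex subgraph of density at least $\alpha+c$. It applies this to a large blow-up $B$ of $G_M$. An $M$-set $W$ meets a set $I$ of at most $M$ parts, so $|B[W]|\le\frac{M^3}{6}\La(G_M[I])\le\frac{\alpha M^3}{6}$. The density of $B[W]$ is therefore at most $\alpha M^2/((M-1)(M-2))<\alpha+c$, a contradiction.

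You instead stay with the Turán-density definition stated in Section~1 and exhibit a finite family $\mathcal F$ with $\pi(\mathcal F)\in(\alpha,\alpha+c)$. For the lower bound, blow-ups of $G_t$ are $\mathcal F$-free by the same observation: a small subgraph of a blow-up lies in a blow-up of a small subgraph of $G_t$. For the upper bound, you average over $t$-sets and use the same uniform-weight inequality $|H|\le\frac{t^3}{6}\La(H)$.

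So the core estimates coincide. Your version is self-contained with respect to the paper's own definition of jump: in effect you prove inline the passage from that definition to the local form, which the paper takes for granted. The paper's version is shorter once the local form is granted, and it avoids building $\mathcal F$ and checking $\mathcal F$-freeness.

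Some cosmetic points:
\begin{itemize}
\item The first justification you give for Step~1 is garbled and can be dropped. The averaging computation in your last paragraph is the full argument, and Step~1 is just its contrapositive, so there is no need to ``run it with $\alpha$ in place of $\alpha+\varepsilon/2$''.
\item The dependence of $\varepsilon$ on $G_{m_0}$ is superfluous. It suffices to choose $t$ with $\alpha t^2/((t-1)(t-2))<\alpha+c$.
\item The paper's convention requires families to be non-empty. You should note that $K^{(3)}_t\in\mathcal F$ once $(t-1)(t-2)/t^2>\alpha$.
\end{itemize}
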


For distinct vertices $u,v$ of a $3$-graph $Q$, the \emph{degree} of the pair $uv$ is
\[
        d_Q(u,v)
        \coloneqq |\{w \colon uvw\in Q\}|.
\]
The \emph{maximum codegree} of $Q$ is
\[
        \Delta_2(Q)
        \coloneqq \max_{u\ne v}d_Q(u,v).
\]
We call a $3$-graph $Q$ \emph{sparse} if for every $S\subseteq V(Q)$ with $|S|\ge 2$,
\[
        |Q[S]|\le |S|-2.
\]
Given a $3$-graph $Q$, the \emph{cone} of $Q$ is the $3$-graph $\cone(Q)$ given by
\[
        V(\cone(Q))=V(Q)\cup\{v_0\},
        \qquad
        \cone(Q)=Q\cup\{v_0uv \colon u,v\in V(Q),\ u\ne v\},
\]
where $v_0\notin V(Q)$ is called the apex vertex of the cone.

\section{The local Lagrangian estimate}\label{sec:local}

The main result of this section is the following upper bound for the Lagrangian of the cone of a sparse $3$-graph $Q$.

\begin{theorem}\label{thm:local-cone}
Let $Q$ be a sparse $3$-graph with $\Delta_2(Q)\le2$. Then $\La(\cone(Q))\le 4/9$.
\end{theorem}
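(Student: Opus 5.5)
Write $x_0$ for the weight on the apex $v_0$ and $s=1-x_0$ for the total weight on $V(Q)$, and set $y_i=x_i/s$ for $i\in V(Q)$, so that $\sum_i y_i=1$. Then
\[
p_{\cone(Q)}(\mathbf{x}) = 6x_0 s^2\sum_{i<j}y_iy_j + s^3 p_Q(\mathbf{y}) = 3x_0s^2\Bigl(1-\sum_i y_i^2\Bigr)+s^3p_Q(\mathbf{y}).
\]
The pure $ABB$ value $4/9$ comes from the first term with $\sum y_i^2\to 0$ and $x_0=1/3$. So the whole task is to show that the gain from the edges of $Q$ never beats the loss $3x_0s^2\sum_i y_i^2$.

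The plan is to prove a weighted inequality on the simplex in $\mathbf{y}$:
\[
p_Q(\mathbf{y})\le c\sum_i y_i^2
\]
for a suitable constant $c$, possibly together with a bound depending on $\max_i y_i$. We then optimise over $x_0$ as a one-variable problem, which is where the appendix's analytic lemma should enter.

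For the edge bound, note first that sparseness ($|Q[S]|\le|S|-2$) means the support behaves like a hypertree: each connected piece can be built by adding one edge at a time, each new edge bringing at least one new vertex. Combined with $\Delta_2\le 2$, this gives an ordering $v_1,v_2,\dots$ of the support in which each vertex $v_t$ lies in few edges with earlier vertices. I would aim for $p_Q(\mathbf{y})=6\sum_{ijk}y_iy_jy_k\le \sum_i y_i^2\cdot M$, where $M$ is an upper bound on the weight in each link. The estimate proceeds edge by edge using AM–GM, $y_iy_jy_k\le y_k\cdot(y_i^2+y_j^2)/2$, charging each edge to the pair of its earlier vertices. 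Since codegree is at most $2$, each pair receives at most two charges.

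The charging gives
\[
p_Q(\mathbf{y})\le 6\sum_i y_i^2\,\max_k y_k\cdot O(1).
\]
Together with $\sum_i y_i^2\ge \max_k y_k^2$, this reduces the claim to a two-parameter inequality in $x_0$ and $m=\max_k y_k$.

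Before trusting the constants, I would check small cases by local optimality, i.e. Lagrange multiplier equations at the optimal $\mathbf{x}$. Two standard facts for Lagrangians help here:
\begin{itemize}[nosep]
\item at an optimum, all support vertices have equal partial derivatives;
\item any two support vertices must be covered by a common edge, otherwise weight can be merged.
\end{itemize}
In $\cone(Q)$ every pair inside $V(Q)$ is covered via $v_0$, so the second fact gives nothing new. Equal partials, however, give
\[
3x_0 s(1-y_i)\cdot 2/s + \partial_i(\ldots) = \lambda\cdot 3.
\]
Summing these relations should yield the identity $3\lambda=\ldots$ and a clean bound.

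The main obstacle is obtaining the sharp constant $4/9$ rather than something slightly larger. The crude AM–GM charging loses a factor, and the extremal configurations are delicate. For example, $Q$ a single edge gives $\cone$ equal to $K_4^{(3)}$ minus nothing? That is $K_4^{(3)}$ itself, with Lagrangian $6\cdot 4/64=3/8<4/9$, which is fine. Larger sparse trees with codegree $2$ (such as paths of edges) must be checked not to exceed $4/9$.

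To handle this I would first use the optimality conditions to show that an optimal support of $Q$ has bounded size. I would then proceed by induction on the number of edges of $Q$, removing a leaf edge. Sparseness guarantees a vertex of degree at most $1$ in $Q$, or a vertex whose pairs lie in at most one edge. I would compare the Lagrangian after moving that leaf vertex's weight onto its neighbours, showing this does not decrease $p$ unless the configuration is already at most $4/9$. That comparison reduces to a one-variable inequality, presumably the appendix lemma.
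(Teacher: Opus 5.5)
Your opening reduction matches the paper's first step: you split off the apex weight to get $3x_0s^2(1-\rho)+s^3p_Q(\mathbf y)$ with $\rho=\sum_i y_i^2$, then optimize over $x_0$ as a one-variable problem. After that, the plan has a genuine gap, because the structural and analytic claims it rests on are false.

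\textbf{Sparseness does not give a hypertree build order.} The Pasch configuration $\{123,145,246,356\}$ is sparse: it has 4 edges on 6 vertices, at most 2 inside any 5 vertices, and at most 1 inside any 4. It also has $\Delta_2=1$. Yet every vertex has degree $2$, so there is no leaf edge, and in every ordering the last edge brings no new vertex.

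\textbf{The charging bound fails even when such an order exists.} The bound $p_Q(\mathbf y)\le O(1)\,\rho\max_k y_k$ breaks because a vertex can lie in unboundedly many charged pairs. Let $Q$ be the expansion of $K_h$: a vertex set $H$ with $|H|=h$, plus an edge $uvw_{uv}$ with a private vertex $w_{uv}$ for each pair $uv\subseteq H$. This $Q$ is sparse with $\Delta_2=1$. Put weight $h^{-3/2}$ on each vertex of $H$ and spread the remaining weight evenly over the $w_{uv}$. Then $p_Q\sim6h^{-3}$, $\rho\sim3h^{-2}$ and $\max_k y_k=h^{-3/2}$, so $p_Q/(\rho\max_k y_k)\sim2\sqrt h\to\infty$.

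\textbf{The optimal support is not bounded.} For $Q=\emptyset$ on $N$ vertices, the unique optimum of $\cone(Q)$ uses all $N+1$ vertices.

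\textbf{Leaf-merging is not monotone.} For a leaf edge $ijk$, moving $y_k$ onto $y_i$ raises $\rho$ by $2y_iy_k$, which costs $6x_0s^2y_iy_k$. Meanwhile the $Q$-part changes by $6s^3y_k(d_i'-y_iy_j)$, where $d_i'$ is the link weight of $i$ in $Q-k$. If $i$ lies in no other edge, the net change is strictly negative. Your fallback ``unless the configuration is already at most $4/9$'' is the theorem itself.

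\textbf{What is missing} is the correct target and a mechanism that genuinely uses $\Delta_2\le2$. For fixed $\mathbf y$, optimizing $x_0$ shows that the cone polynomial stays $\le4/9$ exactly when $q:=p_Q(\mathbf y)/6\le\tau(\rho)$. Here $\tau(\rho)=\tfrac12(1-\rho)(1-\sqrt{1-\rho})$ for $\rho\le5/9$, and $\tau=2/27$ beyond. Since $\tau(\rho)=\rho/4-O(\rho^2)$, the required estimate in the spread-out regime is pinned to a specific constant, which your plan never identifies.

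The paper proves $q\le\tau(\rho)$ in two regimes.
\begin{itemize}
\item For $\rho\ge\rho_0=(5-2\sqrt3)/9$, where $\tau(\rho_0)=1/27$: sparsity alone gives $q\le1/27$. In a minimal-support counterexample every pair lies in an edge, so $\binom N2\le3(N-2)$ and $N\le4$.
\item For $\rho<\rho_0$: take a full-support maximizer of $q-\tau(\rho)$. Write $d_i=\sum_{jk:\,ijk\in Q}y_jy_k$, $B=\tau'(\rho)$ and $A=3\tau(\rho)-2B\rho$. The Lagrange conditions give $d_i>A+2By_i$ for every vertex, so $\sum_i d_i>NA+2B$ grows linearly in $N$. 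On the other hand, with $m_{jk}$ the codegree of $jk$, Cauchy--Schwarz gives $\sum_i d_i=\sum_{j<k}m_{jk}y_jy_k\le\rho\sqrt{3N-6}$. This uses $m_{jk}^2\le2m_{jk}$, which is where $\Delta_2\le2$ enters, and $\sum_{j<k}m_{jk}=3|Q|\le3N-6$. Linear growth beats $\sqrt N$ growth, and a quadratic in $\sqrt{3N-6}$ turns this into an outright contradiction.
\end{itemize}

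Your instinct to use optimality conditions is right, but they must be applied to a hypothetical violator of $q\le\tau(\rho)$. This linear-versus-square-root comparison is the idea your plan lacks.
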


For the remainder of this section, $Q$ is a finite $3$-graph. For a probability vector $\mathbf{z}=(z_i)_{i\in V(Q)}$, write
\[
        q_Q(\mathbf{z}) \coloneqq \sum_{ijk\in Q}z_iz_jz_k,
        \qquad
        \rho(\mathbf{z}) \coloneqq \sum_{i\in V(Q)}z_i^2.
\]
When $Q$ is clear, write $q=q_Q(\mathbf{z})$ and $\rho=\rho(\mathbf{z})$.

\subsection{Optimizing the apex weight}

Give the apex vertex $v_0$ weight $1-b$, and give the vertices of $Q$ total weight $b$, distributed according to the probability vector $\mathbf{z}$. Thus the weight of $i\in V(Q)$ is $bz_i$. The contribution of the cone edges is
\[
\begin{aligned}
        6\sum_{i<j}(1-b)(bz_i)(bz_j)
        =6(1-b)b^2\sum_{i<j}z_iz_j  
        =3(1-b)b^2(1-\rho),
\end{aligned}
\]
and the contribution of the edges of $Q$ is $b^3p_Q(\mathbf{z})=6b^3q$. Hence the normalized Lagrangian polynomial of $\cone(Q)$ at this weighting is
\begin{align}\label{eq:cone-poly}
    \Phi(b,\mathbf{z})
        &\coloneqq 6\Big( (1-b)b^2\sum_{i<j}z_iz_j
             +b^3q \Big) \notag \\
        &=6(1-b)b^2\frac{1-\rho}{2}+6b^3q
        =3(1-b)b^2(1-\rho)+6b^3q.
\end{align}

Define
\begin{equation}\label{eq:tau-def}
        \tau(\rho) \coloneqq 
        \begin{cases}
        \dfrac{(1-\rho)(1-\sqrt{1-\rho})}{2},&0\le \rho\le 5/9,\\[1.2ex]
        \dfrac{2}{27},&5/9\le \rho\le 1.
        \end{cases}
\end{equation}
For fixed $\rho$, the expression in~\eqref{eq:cone-poly} is increasing in $q$. Thus the relevant quantity is the largest value of $q$ for which optimizing over the apex weight $b$ still gives value at most $4/9$. This critical value is $\tau(\rho)$ defined above. The change at $\rho=5/9$ corresponds to the maximizing value of $b$ moving from an interior point to an endpoint.

The next lemma is a one-variable calculation; its proof is given in the Appendix.

\begin{lemma}\label{lem:tau-threshold}
If $q\le\tau(\rho)$, then
\[
        \max_{0\le b\le1}\Phi(b,\mathbf{z})\le\frac49.
\]
\end{lemma}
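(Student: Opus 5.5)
The plan is to reduce to the extremal case $q=\tau(\rho)$ and then optimize the resulting cubic in $b$ explicitly in each of the two regimes in the definition of $\tau$. First, rewrite~\eqref{eq:cone-poly} with $s\coloneqq 1-\rho\in[0,1]$ as
\[
        \Phi(b,\mathbf{z}) = 3s\,b^2-\bigl(3s-6q\bigr)b^3 .
\]
The coefficient of $q$ is $6b^3\ge 0$, so for fixed $b\in[0,1]$ and fixed $\rho$ the quantity $\Phi$ is nondecreasing in $q$. It therefore suffices to prove $\max_{0\le b\le 1}\bigl(3sb^2-(3s-6\tau(\rho))b^3\bigr)\le 4/9$.

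\emph{Case $0\le\rho\le 5/9$.} Here $s\in[4/9,1]$. Put $t\coloneqq\sqrt s\in[2/3,1]$. Then $6\tau(\rho)=3s(1-t)$, so $3s-6\tau=3st$ and the function to bound is
\[
        f(b)=3sb^2-3stb^3 .
\]
We have $f'(b)=6sb-9stb^2$, which vanishes in the interior only at $b^\ast=2/(3t)$, and $b^\ast\le1$ precisely because $t\ge 2/3$. Since $f(0)=0$ and $f$ increases on $[0,b^\ast]$ and decreases afterwards, the maximum is
\[
        f(b^\ast)=3s\cdot\frac{4}{9t^2}-3st\cdot\frac{8}{27t^3}=\frac43-\frac89=\frac49,
\]
using $s=t^2$. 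This also explains the threshold $\rho=5/9$: it is exactly where $b^\ast$ reaches the endpoint $1$.

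\emph{Case $5/9\le\rho\le1$.} Here $s\in[0,4/9]$ and $\tau=2/27$, so $6\tau=4/9$. For fixed $b$ the expression
\[
        3sb^2-(3s-4/9)b^3 = 3s(b^2-b^3)+\tfrac49 b^3
\]
is affine and nondecreasing in $s$, since $b^2-b^3\ge0$. Hence it is at most its value at $s=4/9$, namely
\[
        g(b)=\tfrac43b^2-\tfrac89b^3 .
\]
Its derivative is $\tfrac83 b(1-b)\ge0$, so $g(b)\le g(1)=4/9$.

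Combining the two cases gives the lemma. There is no real obstacle; the only points needing care are checking that the critical point $b^\ast$ lies in $[0,1]$ in the first regime, and handling the monotonicity in $s$ in the second regime so that the endpoint analysis is clean.
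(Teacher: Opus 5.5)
Your proof is correct and follows essentially the paper's argument: you split at $\rho=5/9$, locate the interior critical point $b^\ast=2/(3\sqrt{1-\rho})\le 1$ in the first regime, and show in the second regime that the cubic is nondecreasing in $b$, so its maximum is at $b=1$. The differences are minor: you first raise $q$ to $\tau(\rho)$ by monotonicity, whereas the paper keeps $q$ general and shows $f(b_0)\le 4/9\iff q\le\tau(\rho)$; and in the second regime you raise $1-\rho$ to $4/9$, which replaces the paper's subcase split according to whether $6q\le 1-\rho$.
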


Thus Theorem~\ref{thm:local-cone} follows from the following proposition.

\begin{proposition}\label{prop:q-tau}
Let $Q$ be a sparse $3$-graph with $\Delta_2(Q)\le2$. Then, for every probability vector $\mathbf{z}$ on $V(Q)$, we have $q_Q(\mathbf{z})\le \tau(\rho(\mathbf{z}))$.
\end{proposition}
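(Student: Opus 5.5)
The plan is to prove Proposition~\ref{prop:q-tau} by induction on $|V(Q)|$, peeling off a low-degree vertex. Only the support of $\mathbf z$ matters, and sparsity and $\Delta_2(Q)\le 2$ pass to induced subgraphs. So we may assume every $z_i>0$ and write $n=|V(Q)|$.

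\emph{Step 1 (a vertex of small degree).} Sparsity with $S=V(Q)$ gives $|Q|\le n-2$. Hence $\sum_v d_Q(v)=3|Q|\le 3n-6$, and some vertex $v$ has degree at most $2$. Its link $L_v$ therefore consists of at most two pairs. The case $n\le 2$ is trivial, since then $q=0$.

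\emph{Step 2 (peeling).} Write $z_v=t$ and $\mathbf z=(t,(1-t)\mathbf y)$, where $\mathbf y$ is a probability vector on $V(Q)\setminus\{v\}$. Then
\[
q_Q(\mathbf z)=(1-t)^3\,q_{Q-v}(\mathbf y)+t(1-t)^2\ell(\mathbf y),
\qquad
\rho(\mathbf z)=t^2+(1-t)^2\rho(\mathbf y),
\]
where $\ell(\mathbf y)=\sum_{ab\in L_v}y_ay_b$. By induction, $q_{Q-v}(\mathbf y)\le\tau(\rho(\mathbf y))$.

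We also need an upper bound on $\ell$ in terms of $\rho(\mathbf y)$. If $L_v$ is two disjoint pairs, then $\ell\le\frac12(y_a^2+y_b^2+y_c^2+y_d^2)$. If the pairs share a vertex, $L_v=\{ab,ac\}$, then $\ell=y_a(y_b+y_c)$ and $y_b+y_c\le 1-y_a$. We must also use the constraints that $\ell$ places on $\mathbf y$, for instance $y_a^2\le\rho(\mathbf y)$. If needed, we should exploit that the vertices of $L_v$ carry edges of $Q-v$ whose weight is already counted in $q_{Q-v}$. Here the codegree condition enters, since $v$'s pairs $ab$ are covered at most twice. Sparsity of $Q[S]$ for the sets $S$ formed by $v$ together with nearby edges limits how much the link can overlap.

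\emph{Step 3 (the functional inequality).} It then suffices to show that, for all $t\in[0,1]$ and admissible $(\rho',\ell)$,
\[
(1-t)^3\tau(\rho')+t(1-t)^2\ell\le\tau\bigl(t^2+(1-t)^2\rho'\bigr).
\]
The regime $\rho\ge5/9$ amounts to showing $q\le 2/27$. The regime $\rho\le 5/9$ uses the explicit formula. Both sides are explicit, so this is a two- or three-variable calculus problem. The base cases should be checked against the tight examples. A single edge with uniform weights has $q=1/27<\tau(1/3)$. For weights concentrated on one edge, with a heavy vertex, $\rho$ grows toward $5/9$ and $q$ should approach $2/27$.

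\emph{Main obstacle.} The crude inductive inequality in Step 3 may fail, because $\tau$ is not obviously "superadditive" in the needed sense. In particular, bounding $q_{Q-v}$ by $\tau$ discards the correlation between the link weights and the remaining edges. If so, the first fallback is to strengthen the induction hypothesis. Either track $\max_i z_i$ as an extra parameter, giving a bound $q\le\tau(\rho,\max_i z_i)$, or use first-order (KKT) conditions at an optimal $\mathbf z$ for fixed $\rho$. At a maximizer of $q-\mu\rho$, every vertex satisfies
\[
\sum_{jk\in L_i}z_jz_k=2\mu z_i+\nu ,
\]
and summing over vertices gives $3q=2\mu\rho+\nu$. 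Combining this identity with a low-degree vertex, whose left-hand side involves at most two pairs, should pin down $\mu$ and $\nu$ well enough to verify $q\le\tau(\rho)$ directly.
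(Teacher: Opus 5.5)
There is a genuine gap. The inductive inequality in Step~3 is false, and neither fallback is developed far enough to replace it.

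\textbf{Step 3 fails.} Let $t\to0$. Expanding both sides to first order in $t$, your inequality requires $\ell\le 3\tau(\rho')-2\rho'\tau'(\rho')=\tfrac{1}{2}(1-s)(2-s)$, where $s=\sqrt{1-\rho'}$. For small $\rho'$ this is about $\rho'/4$. Your constraints on $\ell$, however, allow $\ell\approx\rho'/2$. Take the link $\{ab,cd\}$ with $y_a=y_b=y_c=y_d=\varepsilon$ and spread the remaining weight thinly. Then $\rho'\approx4\varepsilon^2$ and $\ell=2\varepsilon^2$. A link $\{ab,ac\}$ even reaches $\ell\approx\rho'/\sqrt2$. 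This configuration is realizable, for instance by $Q=\{vab,vcd\}$ plus isolated vertices, so no refinement of the admissible set of pairs $(\rho',\ell)$ can rescue Step~3. In exactly these configurations $q_{Q-v}(\mathbf z)$ is far below $\tau(\rho')$. Replacing it by $\tau(\rho')$ throws away the slack the true statement needs.

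\textbf{The KKT fallback also falls short.} With $\rho$ fixed, the multiplier $\mu$ is unknown. Even once it is pinned to $\tau'(\rho)$ (as below), every vertex satisfies the same stationarity identity. So a single low-degree vertex $v$ only yields $\nu\le d_v$. But $d_v$, a sum of at most two products $z_az_b$, can a priori be of order $\rho/2$, while you need $\nu\le A\approx\rho/4$. Incidentally, your ``tight examples'' are not tight: sparsity alone forces $q\le 1/27$, so $q$ never approaches $2/27$.

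\textbf{What is missing.} The missing idea is a \emph{global} averaging of the stationarity conditions, together with a separate treatment of large $\rho$.

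First, sparsity alone gives $q_Q(\mathbf z)\le 1/27$. In a minimal-support counterexample every pair lies in an edge, since otherwise $q$ is affine along $z_u+z_v=\mathrm{const}$ and the support could be reduced. Hence $\binom{N}{2}\le 3|Q|\le 3(N-2)$, which forces $N\le 4$, and those cases are checked by hand. Since $\tau$ is nondecreasing with $\tau(\rho_0)=1/27$, where $\rho_0=(5-2\sqrt3)/9$, this settles every $\rho\ge\rho_0$.

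For $\rho<\rho_0$, maximize $F=q-\tau(\rho)$ rather than $q$ at fixed $\rho$. The maximizer is interior, so the multiplier on $\rho$ is the known value $B=\tau'(\rho)=(3s-2)/4$. Then $q>\tau(\rho)$ gives $d_i>A+2Bz_i$ at \emph{every} vertex, where $A=\tfrac{1}{2}(1-s)(2-s)$. Summing over all $N$ vertices without weights gives $\sum_i d_i>NA+2B$. On the other hand, Cauchy--Schwarz gives
$\sum_i d_i=\sum_{j<k}m_{jk}z_jz_k\le\rho\sqrt{3N-6}$.
This uses $m_{jk}^2\le 2m_{jk}$ (codegree at most two) and $\sum_{j<k}m_{jk}=3|Q|\le 3N-6$ (sparsity).

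Minimizing the resulting quadratic in $\sqrt{3N-6}$ reduces the contradiction to $8A(A+B)>3\rho^2$, that is, $(1-s)(s^3+10s^2-11s+1)>0$. This holds for $s>(1+\sqrt3)/3$ but fails at, e.g., $s=0.9$. So the averaging argument cannot cover all $\rho$ on its own, and the $1/27$ bound is what handles the rest.
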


\subsection[Universal 1/27 bound]{A universal $1/27$ bound}

We first prove a bound that uses only sparsity.

\begin{lemma}\label{lem:one-over-27}
Let $Q$ be a sparse $3$-graph. Then, for every probability vector $\mathbf{z}$ on $V(Q)$, we have $q_Q(\mathbf{z})\le\frac1{27}$.
\end{lemma}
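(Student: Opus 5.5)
The plan is to use the classical Frankl--R\"odl support-reduction argument for Lagrangians, and then let sparsity force the support to be tiny. Since $q_Q$ is continuous on the compact simplex, it attains its maximum. Among all maximizers $\mathbf{z}$, choose one whose support $S=\{i: z_i>0\}$ has the fewest vertices. If $|S|\le 2$ then $q_Q(\mathbf{z})=0$ and there is nothing to prove, so assume $|S|\ge 3$.

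\emph{Step 1: every pair in $S$ is covered by an edge of $Q[S]$.} Suppose $i,j\in S$ lie in no common edge of $Q$. Vary $z_i,z_j$ while keeping $z_i+z_j$ fixed and all other coordinates fixed. No monomial of $q_Q$ contains $z_iz_j$, so $q_Q$ restricted to this segment is an affine function of $z_i$. Hence one of the two endpoints, $z_i=0$ or $z_j=0$, gives a value at least $q_Q(\mathbf{z})$. That endpoint is again a maximizer, but with strictly smaller support, contradicting minimality. Therefore the edges of $Q[S]$ cover all $\binom{|S|}{2}$ pairs of $S$.

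\emph{Step 2: sparsity bounds $|S|$.} Each edge covers exactly three pairs, so by Step 1
\[
|Q[S]|\ge \tfrac13\binom{s}{2}, \qquad s=|S|.
\]
Sparsity gives $|Q[S]|\le s-2$. Combining the two yields $s(s-1)\le 6(s-2)$, that is, $s^2-7s+12\le 0$, so $s\in\{3,4\}$.

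If $s=4$, then $Q[S]$ has at most $2$ edges. Two distinct triples on $4$ vertices share exactly two vertices, so together they cover only $5$ of the $6$ pairs, contradicting Step 1.

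Hence $s=3$ and $Q[S]$ is a single edge $ijk$. By AM--GM,
\[
q_Q(\mathbf{z})=z_iz_jz_k\le \Big(\tfrac{z_i+z_j+z_k}{3}\Big)^3=\tfrac1{27}.
\]

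\emph{Main obstacle.} I do not expect a real obstacle. The only point needing care is Step 1: the linearity argument must produce a maximizer with strictly smaller support, and this is exactly why the maximizer is chosen with minimum support rather than arbitrarily. The codegree hypothesis is not needed here, consistent with the lemma's statement.
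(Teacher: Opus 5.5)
Your proof is correct and is essentially the paper's argument: take a minimal-support extremal vector, use the affine shifting step to force every pair of the support into an edge, and then use sparsity to force $|S|\in\{3,4\}$. You dispose of $|S|=4$ by noting that two triples on four vertices cover only five of the six pairs, which is slightly cleaner than the paper's direct AM--GM bound on $z_az_b(z_c+z_d)$.

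One wording fix is needed in Step 1: you should assume $i,j$ lie in no common edge of $Q[S]$ rather than of $Q$, since monomials through a vertex outside $S$ vanish, and that is what you need to conclude that $Q[S]$ covers all pairs.
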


\begin{proof}
Suppose the lemma fails and choose a counterexample with positive support $S=\{i:z_i>0\}$ of minimum size. Replacing $Q$ by $Q[S]$, we may assume that all weights are positive.

We first claim that every pair of vertices of $Q$ is contained in some edge. Indeed, if $u,v$ are not contained together in any edge, then, after fixing $z_u+z_v$ and all other weights, $q_Q(\mathbf{z})$ is an affine function of $z_u$. Hence one of the two endpoints $z_u=0$ or $z_v=0$ does not decrease $q_Q(\mathbf{z})$, contradicting the minimality of the support.

Let $N=v(Q)$. Since every pair is contained in an edge, we have $\binom N2\le 3|Q|$. By sparsity, $|Q|\le N-2$. Thus $\binom N2\le 3(N-2)$, so $N=3$ or $N=4$.

If $N=3$, then $Q$ has at most one edge and $q_Q(\mathbf{z})\le z_1z_2z_3\le1/27$. If $N=4$, then $|Q|\le2$. If $|Q|\le1$, the same bound is immediate. If $|Q|=2$, the two triples share a pair, so after relabeling they are $abc$ and $abd$. Hence $q_Q(\mathbf{z})=z_az_b(z_c+z_d)$. Writing $x=z_a+z_b$, AM-GM gives
\[
        q_Q(\mathbf{z})\le\frac{x^2}{4}(1-x)\le\frac1{27}.
\]
This contradiction proves the lemma.
\end{proof}

Let
\begin{equation}\label{eq:rho0}
        \rho_0
        \coloneqq \frac{5-2\sqrt3}{9}.
\end{equation}
Then $\tau(\rho_0)=1/27$. Also $\tau$ is increasing on $[0,5/9]$, since for $s=\sqrt{1-\rho}$ one has
\[
        \tau'(\rho)=\frac{3s-2}{4}\ge0
        \qquad (0\le\rho\le5/9).
\]
Therefore Lemma~\ref{lem:one-over-27} proves Proposition~\ref{prop:q-tau} whenever $\rho\ge\rho_0$. It remains to rule out counterexamples with $\rho<\rho_0$.

\subsection[The low-rho region]{The low-$\rho$ region}

Assume for contradiction that Proposition~\ref{prop:q-tau} fails. By Lemma~\ref{lem:one-over-27}, there are a $3$-graph $Q$ satisfying the assumptions of Proposition~\ref{prop:q-tau} and a probability vector $\mathbf{z}^0$ on $V(Q)$ such that
\[
        \rho(\mathbf{z}^0)<\rho_0,
        \qquad
        q_Q(\mathbf{z}^0)>\tau(\rho(\mathbf{z}^0)).
\]
Fix this $Q$. Choose a probability vector $\mathbf{z}$ maximizing
\[
        F(\mathbf{z})
        \coloneqq q_Q(\mathbf{z})-\tau(\rho(\mathbf{z}))
\]
over the compact set
\[
       \Big\{ \mathbf{z} \in \mathbb{R}^{v(Q)} \colon z_i\ge0 \text{ for } i \in V(Q),\ \sum_{i\in V(Q)}z_i=1,\ \rho(\mathbf{z})\le\rho_0 \Big\}.
\]
The maximum is positive and cannot occur on the boundary $\rho=\rho_0$, because Lemma~\ref{lem:one-over-27} gives $q_Q(\mathbf{z})\le1/27=\tau(\rho_0)$ there.

If some coordinates of this maximizing vector are zero, let $S=\{i:z_i>0\}$ and replace $Q$ by $Q[S]$. The assumptions on $Q$ are hereditary, and the same vector, now viewed as a point in the relative interior of the simplex on $S$, still maximizes $F$ for $Q[S]$; otherwise a better vector on $S$ would also be a better vector for the original $Q$. Thus we may assume that $z_i>0$ for every $i\in V(Q)$. Let $N=v(Q)$.

For each vertex $i$, define its weighted link value by $d_i \coloneqq \sum_{jk:ijk\in Q}z_jz_k$. Let $s \coloneqq \sqrt{1-\rho}$. Since $\rho<\rho_0$, we have
\begin{equation}\label{eq:s-range}
        s>\sqrt{1-\rho_0}=\frac{1+\sqrt3}{3}.
\end{equation}
In this range,
\[
        \tau(\rho)=\frac{s^2(1-s)}2,
        \qquad
        B:=\tau'(\rho)=\frac{3s-2}{4}.
\]
For this fixed maximizing vector, the partial derivatives are $\frac{\partial q_Q}{\partial z_i}=d_i$ and $\frac{\partial \rho}{\partial z_i}=2z_i$. Hence
\[
        \frac{\partial F}{\partial z_i}
        =d_i-\tau'(\rho)\,2z_i
        =d_i-2Bz_i.
\]
Since the maximum is attained in the relative interior of the simplex and away from the boundary $\rho=\rho_0$, the only active constraint is $\sum_i z_i=1$. Its gradient is the all-one vector, so the Lagrange multiplier condition says that all partial derivatives of $F$ are equal to the same constant. Thus there is a real number $\mu$ such that $d_i-2Bz_i=\mu$ for every $i\in V(Q)$. Multiplying by $z_i$ and summing over $i$ gives $\mu=3q-2B\rho$. Since $q>\tau(\rho)$, we obtain
\begin{equation}\label{eq:kkt-lower}
\begin{aligned}
        d_i
        =\mu+2Bz_i
        =3q-2B\rho+2Bz_i 
        >3\tau(\rho)-2B\rho+2Bz_i
        =A+2Bz_i,
\end{aligned}
\end{equation}
where
\[
        A
        \coloneqq 3\tau(\rho)-2B\rho
        = \frac{(1-s)(2-s)}2.
\]
Summing~\eqref{eq:kkt-lower} over all vertices gives
\begin{equation}\label{eq:sum-di-lower}
        \sum_i d_i>NA+2B.
\end{equation}

We now upper-bound the same quantity. For a pair $jk$, let $m_{jk}$ be the codegree of $jk$ in $Q$. The condition $\Delta_2(Q)\le2$ gives $m_{jk}\in\{0,1,2\}$. Since $\mathbf{z}$ has full support and $Q$ is sparse, we have $\sum_{j<k}m_{jk}=3|Q|\le3N-6$. As $m_{jk}^2\le2m_{jk}$, we obtain
\begin{equation}\label{eq:m-square}
        \sum_{j<k}m_{jk}^2\le6N-12.
\end{equation}
Moreover, $\sum_i d_i=\sum_{j<k}m_{jk}z_jz_k$. By Cauchy's inequality,~\eqref{eq:m-square}, and
\[
        \rho^2
        = \Big(\sum_i z_i^2\Big)^2
        \ge 2\sum_{i<j}z_i^2z_j^2,
\]
we get
\begin{align}
        \sum_i d_i
        \le
        \Big( \sum_{j<k}m_{jk}^2 \Big)^{1/2}
        \Big( \sum_{j<k}z_j^2z_k^2 \Big)^{1/2} 
        \le \sqrt{6N-12}\left(\frac{\rho^2}{2}\right)^{1/2}
        =\rho\sqrt{3N-6}.
\label{eq:sum-di-upper}
\end{align}
Combining~\eqref{eq:sum-di-lower} and~\eqref{eq:sum-di-upper}, any counterexample must satisfy
\begin{equation}\label{eq:necessary-counter}
        NA+2B<\rho\sqrt{3N-6}.
\end{equation}

We show that~\eqref{eq:necessary-counter} is impossible. Put $y \coloneqq \sqrt{3N-6}$. Then $N=(y^2+6)/3$, and
\[
        NA+2B-\rho\sqrt{3N-6}
        =\frac A3y^2-\rho y+2A+2B.
\]
This quadratic in $y$ is bounded below, for all real $y$, by $2A+2B-\frac{3\rho^2}{4A}$. Thus it is enough to prove
\begin{equation}\label{eq:main-algebra}
        8A(A+B)>3\rho^2.
\end{equation}
Using
\[
        A=\frac{(1-s)(2-s)}2,
        \qquad
        B=\frac{3s-2}{4},
        \qquad
        \rho=1-s^2,
\]
a direct expansion gives
\[
        8A(A+B)-3\rho^2
        =(1-s)(s^3+10s^2-11s+1).
\]
Since $0<s<1$, it remains to show
\[
        g(s):=s^3+10s^2-11s+1>0.
\]
By~\eqref{eq:s-range}, $s>(1+\sqrt3)/3$. Also
\[
        g'(s)=3s^2+20s-11>0
        \qquad\text{for all }s\ge\frac{1+\sqrt3}{3},
\]
and
\[
        g\left(\frac{1+\sqrt3}{3}\right)
        =\frac{58-33\sqrt3}{27}>0.
\]
Therefore $g(s)>0$, proving~\eqref{eq:main-algebra}. This contradicts~\eqref{eq:necessary-counter}. Hence no low-$\rho$ counterexample exists, and Proposition~\ref{prop:q-tau} follows.

\begin{proof}[Proof of Theorem~\ref{thm:local-cone}]
Consider an arbitrary probability weighting $\mathbf{x}$ of $V(\cone(Q))$. Put
\[
        b \coloneqq \sum_{i\in V(Q)}x_i,
        \qquad
        x_{v_0} \coloneqq 1-b.
\]
If $b=0$, then all weight is on the apex vertex, so the Lagrangian polynomial is zero. Suppose now that $b>0$, and define a probability vector $\mathbf{z}$ on $V(Q)$ by $z_i \coloneqq x_i/b$. 
By Proposition~\ref{prop:q-tau}, we have $q_Q(\mathbf{z})\le\tau(\rho(\mathbf{z}))$. Thus the hypothesis of Lemma~\ref{lem:tau-threshold} is satisfied for the expression in~\eqref{eq:cone-poly}, with $q=q_Q(\mathbf{z})$ and $\rho=\rho(\mathbf{z})$. Since the present value of $b$ lies in $[0,1]$, Lemma~\ref{lem:tau-threshold} gives
\[
        p_{\cone(Q)}(\mathbf{x})
        =
        \Phi(b,\mathbf{z})
        \le
        \max_{0\le b'\le1}\Phi(b',\mathbf{z})
        \le \frac49.
\]
Since this holds for every weighting $\mathbf{x}$, taking the maximum over all weightings gives $\La(\cone(Q))\le4/9$.
\end{proof}

\section{Proof of Theorem~\ref{thm:main}}\label{SEC:proof-main}

We present the proof of Theorem~\ref{thm:main} in this section. We need a $3$-graph on $t$ vertices with maximum codegree at most $2$, more than $t^2/4$ edges, and no small subgraph with too many edges. This is obtained from the high-cogirth design theorem of Delcourt and Postle~\cite{DelcourtPostle24}.

We recall the part of their theorem that we need. For two Steiner triple systems $S_1,S_2$ on the same ground set, regard triples as labelled by the system to which they belong. The pair $(S_1,S_2)$ has cogirth at least $g$ if, for every $2\le i<g$, no $i$ labelled triples from $S_1\sqcup S_2$ span at most $i+1$ vertices.

\begin{theorem}[\cite{DelcourtPostle24}, specialized form]\label{thm:DP}
    For every integer $g \ge 3$ there is $t_0$ such that for every $t\ge t_0$ with $t\equiv1,3\pmod6$, there exist two edge-disjoint Steiner triple systems $S_1,S_2$ on the same $t$-vertex set such that each has girth at least $g$, and the pair $(S_1,S_2)$ has cogirth at least $g$.
\end{theorem}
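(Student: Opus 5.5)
We do not reprove this result; we reduce it to the general high-girth design theorem of Delcourt and Postle~\cite[Theorem~3.3]{DelcourtPostle24}. That theorem gives designs in a host structure that avoid any family of bounded-size ``conflict'' configurations satisfying suitable degree bounds. The plan is to phrase the pair $(S_1,S_2)$ as one such design problem.

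\textbf{Setting up the design problem.} Consider the ``doubled'' complete graph on a $t$-set $V$: each pair $uv$ appears in two labelled copies, $(uv,1)$ and $(uv,2)$. The allowed pieces are labelled triangles $(T,c)$ with $T\in\binom V3$ and $c\in\{1,2\}$, and $(T,c)$ covers the three copies $(e,c)$ for $e\subset T$. A perfect decomposition of all labelled pairs into such pieces is exactly an ordered pair $(S_1,S_2)$ of Steiner triple systems on $V$, where $S_c$ is the set of triangles with label $c$. The divisibility conditions are those of a Steiner triple system in each label, so they hold precisely when $t\equiv1,3\pmod 6$.

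\textbf{Encoding the remaining properties as conflicts.} Declare a conflict to be any set of $i$ labelled triangles, with $2\le i<g$, spanning at most $i+1$ vertices.
\begin{enumerate}[label=(\roman*)]
\item Avoiding all conflicts is, by definition, the statement that $(S_1,S_2)$ has cogirth at least $g$.
\item Restricting to conflicts inside a single label shows that each $S_c$ has girth at least $g$.
\item Edge-disjointness is also free. If $T\in S_1\cap S_2$, then $\{(T,1),(T,2)\}$ is a set of $2$ labelled triples on $3=2+1$ vertices, which is a forbidden conflict.
\end{enumerate}
So every property in Theorem~\ref{thm:DP} is a conflict-avoidance property of one design problem.

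\textbf{Verifying the hypotheses.} The key step is to check the hypotheses of \cite[Theorem~3.3]{DelcourtPostle24} for this conflict system.

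First, every conflict has bounded size, namely fewer than $g$ triangles.

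Second, we need the conflict-degree bounds. For fixed $i$, a labelled triangle lies in at most $O(t^{\,i+1-3})=O(t^{\,i-2})$ conflicts of size $i$. This is because the remaining $i-1$ triangles live on at most $i+1$ vertices, three of which are already fixed. This matches the Erd\H{o}s-configuration count used in the high-girth Steiner triple system literature~\cite{GlockKuhnLoOsthus20,KwanSahSawhneySimkin24}, and it is the regime for which their theorem is designed.

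Third, we need the ``non-triviality'' hypotheses: the conflicts must not already be forced by the design structure, and the conflict counts must have the required codegree-type bounds. The only new conflicts compared with the single-system case are mixed-label configurations. Among these, the size-$2$ conflicts $\{(T,1),(T,2)\}$ span exactly $3$ vertices. Such a conflict just forbids reusing a triple and has degree $1$, which is within the allowed bounds. Since each triangle is in $O(1)$ of them, they can alternatively be handled as the trivially sparse part of the conflict system.

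\textbf{Conclusion and main obstacle.} Applying the theorem for $t\ge t_0(g)$ then yields the pair $(S_1,S_2)$. The main obstacle I expect is matching the precise form of the hypotheses in \cite[Theorem~3.3]{DelcourtPostle24}: the labelled doubled host must fit their framework of ``$F$-designs in a host with conflicts,'' and the mixed-label conflict counts must satisfy their quantitative degree and codegree conditions uniformly in $i<g$. If their framework does not directly allow two labels, a fallback is sequential. First take $S_1$ of girth at least $g$ from~\cite{KwanSahSawhneySimkin24}. Then apply the conflict-free design theorem to $K_t$ with $S_1$ fixed, forbidding triples of $S_1$ and every configuration which together with triples of $S_1$ has fewer than $g$ triples on at most one more vertex than triples. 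Here one must check that these induced conflicts, of sizes between $1$ and $g-1$ in the new system, still obey the degree bounds because $S_1$ itself has high girth.
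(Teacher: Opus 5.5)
Your point (iii) is exactly the paper's argument for edge-disjointness. Two labelled copies of a common triple form $2$ triples on $3$ vertices, which is forbidden once the cogirth is at least $3$. Beyond that, the paper does no reduction. As the paper uses it, \cite[Theorem~3.3]{DelcourtPostle24} already gives two $(n,q,r)$-Steiner systems, each of girth at least $g$, whose labelled union has cogirth at least $g$. Theorem~\ref{thm:DP} is then just the case $q=3$, $r=2$, together with (iii). You instead treat Theorem~3.3 as a general conflict-avoiding design theorem whose hypotheses you never state, and you leave their verification as the ``main obstacle''. As written, this is a sketch conditional on a theorem of unspecified form, not a reduction.

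There is also a concrete error in the encoding: step (ii) is false. Your conflicts are only $(i+1,i)$-configurations. Girth at least $g$, however, means that no $i$ triples of one system span at most $i+2$ vertices for $2\le i<g$. For example, a Pasch configuration $\{abc,ade,bdf,cef\}$ inside $S_1$ has $4$ triples on $6$ vertices. Any two of its triples span $5$ vertices and any three span $6$, so it contains no $(i+1,i)$-configuration. It therefore passes your conflict test while violating girth $\ge 5$.

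To encode girth you must add the single-label $(i+2,i)$-configurations. A fixed triangle lies in $\Theta(t^{i-1})$ of these, not $O(t^{i-2})$, so your remark that the $O(t^{i-2})$ count ``matches the Erd\H{o}s-configuration count'' is off by a factor of $t$. The mixed-label conflicts you analyse are subcritical. The girth conflicts sit exactly at the critical density, and handling them is the whole difficulty that the absorption machinery of \cite{KwanSahSawhneySimkin24,DelcourtPostle24} was built to overcome; your degree check does not touch it.

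Your sequential fallback has the same problem. It needs a high-girth $S_2$ that simultaneously avoids the triangles of $S_1$ and the induced conflicts, which is not what \cite{KwanSahSawhneySimkin24} provides. The girth clause is never used later in the paper, which needs only cogirth, but it is part of the statement. The fix is to invoke \cite[Theorem~3.3]{DelcourtPostle24} in its actual two-system form, specialize to $q=3$, $r=2$, and add your observation (iii).
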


\begin{remark}
We briefly explain why Theorem~\ref{thm:DP} may be stated with the words
``edge-disjoint''. Delcourt and Postle's high-cogirth design theorem
gives, for every fixed $g$, two $(n,q,r)$-Steiner systems with girth
at least $g$ and cogirth at least $g$. They also describe this result
as producing two disjoint high-girth Steiner systems with high cogirth.

We use only the case $q=3$ and $r=2$, where a
$(t,3,2)$-Steiner system is a Steiner triple system. In this case the
cogirth condition is imposed on the labelled union $S_1\sqcup S_2$ and
says that no $i$ labelled triples, with $2\le i<g$, span at most
$i+1$ vertices. Thus, if $S_1$ and $S_2$ had a common triple
$abc$, the two labelled copies of $abc$ would span only the three
vertices $a,b,c$, giving a forbidden $(3,2)$-configuration whenever
the cogirth is at least $3$. Since Theorem~\ref{thm:DP} is applied only with $g\ge3$, the cogirth condition itself rules out common triples. Thus the two Steiner triple systems may be taken to be edge-disjoint.
\end{remark}

\begin{lemma}\label{lem:code}
For every integer $m$ there are arbitrarily large integers $t$ and a $3$-graph $\Sint$ on $t$ vertices such that
\begin{enumerate}[label=(\roman*)]
\item $\Delta_2(\Sint)\le2$;
\item $|\Sint|=t(t-1)/3$;
\item every induced subgraph of $\Sint$ on at most $m$ vertices is sparse.
\end{enumerate}
\end{lemma}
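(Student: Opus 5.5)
We take $\Sint := S_1\cup S_2$, where $(S_1,S_2)$ is the pair supplied by Theorem~\ref{thm:DP} with a girth/cogirth parameter $g$ chosen large in terms of $m$; concretely $g:=\max\{m+1,3\}$. Theorem~\ref{thm:DP} provides such pairs for every $t\ge t_0$ with $t\equiv 1,3\pmod 6$, and there are arbitrarily large such $t$. The three properties are then checked one at a time.

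For (i), fix a pair $uv$. It lies in exactly one triple of $S_1$ and exactly one triple of $S_2$, since each is a Steiner triple system. Hence $d_{\Sint}(u,v)\le 2$. For (ii), a Steiner triple system on $t$ points has $t(t-1)/6$ triples. By the Remark following Theorem~\ref{thm:DP}, the systems $S_1$ and $S_2$ are edge-disjoint because $g\ge3$. Therefore $|\Sint|=2\cdot t(t-1)/6=t(t-1)/3$.

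For (iii), let $T\subseteq V(\Sint)$ with $|T|\le m$, and let $S\subseteq T$ with $|S|\ge2$. We must show $|\Sint[S]|\le|S|-2$. Suppose instead that $i:=|\Sint[S]|\ge |S|-1$.
\begin{itemize}[label=--]
\item If $i\le 1$, then $|S|\le i+1\le 2$, but any edge needs $3$ vertices. So $i=0$, and then $0\le|S|-2$ holds, contradicting $i\ge|S|-1$.
\item So assume $i\ge2$. Take any $i':=|S|-1$ of these edges, which is possible since $i\ge |S|-1$. We have $i'\ge 2$, because $|S|\ge3$ whenever an edge is present. These $i'$ labelled triples of $S_1\sqcup S_2$ span at most $|S|=i'+1$ vertices.
\item Also $i'<|S|\le m<g$. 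So these triples form a configuration forbidden by cogirth at least $g$, which is a contradiction.
\end{itemize}
The edges of $\Sint$ correspond bijectively to labelled triples, again by edge-disjointness, so the count is legitimate.

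The main point needing care is the off-by-one bookkeeping. Sparsity requires $|Q[S]|\le|S|-2$, which means forbidding $i$ triples on at most $i+1$ vertices. This matches the cogirth definition exactly, provided we pass to a subfamily of exactly $|S|-1$ triples so that the index $i$ stays in the range $2\le i<g$. Beyond that, the argument is direct bookkeeping from Theorem~\ref{thm:DP}. The girth of each individual system is not even needed.
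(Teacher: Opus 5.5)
Your proof is correct and follows the paper's argument. You take $\Sint=S_1\cup S_2$ from Theorem~\ref{thm:DP}, read off the codegree and edge count from the Steiner property and edge-disjointness, and derive local sparsity from the cogirth condition; the only difference is bookkeeping: the paper uses all $i=|\Sint[U]|\le\binom m3$ triples and so needs $g>\binom m3$, whereas passing to a subfamily of exactly $|S|-1$ triples lets your choice $g=\max\{m+1,3\}$ suffice.
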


\begin{remark}
   We note that we do not need $|\Sint|$ to be as large as $t(t-1)/3$ for our construction to work. Indeed, as the proof of Theorem~\ref{thm:main} below shows, all we need is the inequality
   $$ \frac49\left(1-\frac1t\right)+6|\Sint|\left(\frac{2}{3t}\right)^3>\frac49$$
   which is equivalent to $|\Sint|>t^2/4$.
\end{remark}

\begin{proof}
Fix an integer $g>\max\left\{2,\binom m3\right\}$. By Theorem~\ref{thm:DP}, for all sufficiently large $t\equiv1,3\pmod6$ there are two edge-disjoint Steiner triple systems $S_1,S_2$ on the same $t$-vertex set whose pair-cogirth is at least $g$. Let $\Sint \coloneqq S_1\cup S_2$. Since $S_1$ and $S_2$ are edge-disjoint, we have
\[
        |\Sint|=|S_1|+|S_2|=2\cdot\frac{\binom t2}{3}=\frac{t(t-1)}3.
\]
Each pair of vertices lies in exactly one triple of $S_1$ and exactly one triple of $S_2$. Hence each pair lies in two triples of $\Sint$, proving $\Delta_2(\Sint)\le2$.

It remains to prove local sparsity. Suppose that some $U\subseteq V(\Sint)$ with $|U|\le m$ satisfies $|\Sint[U]|>|U|-2$. Let $i \coloneqq |\Sint[U]|$. A single triple spans three vertices, so $i=1$ cannot violate the inequality. Thus $i\ge2$. The $i$ triples of $\Sint[U]$ span at most $|U|\le i+1$ vertices, so they form an $(i+1,i)$-configuration. Also
\[
        i=|\Sint[U]|\le \binom{|U|}{3}\le\binom m3<g.
\]
This contradicts the cogirth condition. Hence no such $U$ exists.
\end{proof}

We now prove Theorem~\ref{thm:main}.

\begin{proof}[Proof of Theorem~\ref{thm:main}]
Fix an integer $m$. Let $\Sint$ be given by Lemma~\ref{lem:code} on a vertex set $B$ of size $t$, where $t$ is large. Define $G \coloneqq \cone(\Sint)$.

\begin{claim}\label{lem:global-lag}
For all sufficiently large $t$, we have $\La(G)>\frac49$.
\end{claim}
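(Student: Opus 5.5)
We prove Claim~\ref{lem:global-lag} by exhibiting one explicit weighting of $V(G)=B\cup\{v_0\}$ whose normalized Lagrangian polynomial exceeds $4/9$. Guided by the $ABB$ optimum $\max 3ab^2$, attained at $a=1/3$, $b=2/3$, we put weight $1/3$ on the apex $v_0$ and weight $\frac{2}{3t}$ on each of the $t$ vertices of $B$. Since $\La(G)$ is a maximum over all probability vectors, any lower bound on $p_G$ at this vector bounds $\La(G)$ from below.

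\emph{Cone edges.} These are the triples $v_0uv$ with $u,v\in B$, and there are $\binom t2$ of them. Their contribution is
\[
6\binom t2\cdot\frac13\cdot\Big(\frac{2}{3t}\Big)^2=\frac49\Big(1-\frac1t\Big),
\]
which is $4/9$ minus a deficit of $\frac{4}{9t}$. Equivalently, in the notation of~\eqref{eq:cone-poly}, take $b=2/3$ and $\mathbf{z}$ uniform, so that $\rho=1/t$.

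\emph{Internal edges.} These are the edges of $\Sint$. By Lemma~\ref{lem:code}(ii), they contribute
\[
6|\Sint|\Big(\frac{2}{3t}\Big)^3=6\cdot\frac{t(t-1)}{3}\cdot\frac{8}{27t^3}=\frac{16(t-1)}{27t^2}.
\]

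\emph{Comparison.} Adding the two contributions, we need
\[
\frac{16(t-1)}{27t^2}>\frac{4}{9t}.
\]
This is equivalent to $16(t-1)>12t$, that is, $t>4$. It holds for every sufficiently large $t$, and Lemma~\ref{lem:code} supplies arbitrarily large admissible $t$. More generally, the same computation shows the inequality is equivalent to $|\Sint|>t^2/4$, as recorded in the remark after Lemma~\ref{lem:code}.

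There is no real obstacle here. The only point needing care is that $\Sint$ has quadratically many edges, roughly $t^2/3$, which exceeds the $t^2/4$ threshold. Hence the internal gain, of order $1/t$, beats the $\frac{4}{9t}$ loss from the missing diagonal terms of the uniform weighting. This is exactly what the union of two Steiner triple systems provides.
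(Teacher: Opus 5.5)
Your proof is correct and follows the paper's argument: the same weighting ($1/3$ on the apex, $\frac{2}{3t}$ on each vertex of $B$), the same two contributions $\frac49\bigl(1-\frac1t\bigr)$ and $\frac{16(t-1)}{27t^2}$, and the same conclusion that their sum exceeds $4/9$ exactly when $t>4$. Your remark that the inequality is equivalent to $|\Sint|>t^2/4$ also matches the paper's remark after Lemma~\ref{lem:code}.
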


\begin{proof}
Let $x_{v_0} \coloneqq \frac13$ and $x_i \coloneqq \frac{2}{3t}$ for $i \in B$. The $v_0BB$-edges contribute
\[
        6\cdot\frac13\binom t2\left(\frac{2}{3t}\right)^2
        =\frac49\left(1-\frac1t\right).
\]
The edges inside $B$ contribute, using $|\Sint|=t(t-1)/3$,
\[
        6|\Sint|\left(\frac{2}{3t}\right)^3
        =6\cdot\frac{t(t-1)}3\cdot\frac{8}{27t^3}
        =\frac{16(t-1)}{27t^2}.
\]
Therefore
\[
        \La(G)
        \ge
        \frac49\left(1-\frac1t\right)+\frac{16(t-1)}{27t^2}
        =\frac49+\frac4{27t}-\frac{16}{27t^2}.
\]
This is greater than $4/9$ whenever $t>4$.
\end{proof}

\begin{claim}\label{lem:small-subgraphs}
Every subgraph $H\subseteq G$ with $v(H)\le m$ satisfies $\La(H)\le \frac49$.
\end{claim}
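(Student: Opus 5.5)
Let $H\subseteq G$ with $v(H)\le m$. The plan is to embed $H$ in a cone over a small sparse graph and then apply Theorem~\ref{thm:local-cone}.

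First, I use monotonicity of the Lagrangian under taking subgraphs. If $H'\subseteq H''$ on a common vertex set (or $H'$ is a subgraph of $H''$ after adding isolated vertices), then for every weighting we have $p_{H'}(\mathbf{x})\le p_{H''}(\mathbf{x})$, since all terms are nonnegative. Hence $\La(H')\le\La(H'')$. It therefore suffices to bound $\La$ of some supergraph of $H$ inside $G$.

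Second, let $U=V(H)\setminus\{v_0\}\subseteq B$, so $|U|\le m$. Then $H\subseteq G[U\cup\{v_0\}]$. Since $G=\cone(\Sint)$, this induced subgraph is exactly $\cone(\Sint[U])$. If $v_0\notin V(H)$, adding $v_0$ as an extra vertex with weight $0$ shows that $\La(H)\le\La(\cone(\Sint[U]))$ still holds.

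Third, I check the hypotheses of Theorem~\ref{thm:local-cone} for $Q=\Sint[U]$.
\begin{itemize}
\item By Lemma~\ref{lem:code}(iii), $\Sint[U]$ is sparse, because it is an induced subgraph of $\Sint$ on at most $m$ vertices. Sparsity is hereditary in any case, since it quantifies over all subsets $S\subseteq U$.
\item Codegrees can only decrease when passing to induced subgraphs, so $\Delta_2(\Sint[U])\le\Delta_2(\Sint)\le 2$ by Lemma~\ref{lem:code}(i).
\end{itemize}
Theorem~\ref{thm:local-cone} then gives $\La(\cone(\Sint[U]))\le 4/9$, and combining with the first two steps yields $\La(H)\le 4/9$.

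The substantive work was already done in Theorem~\ref{thm:local-cone}. The only points needing care here are the identification $G[U\cup\{v_0\}]=\cone(\Sint[U])$ and the degenerate cases: $v_0\notin V(H)$, which is handled by zero weight, and $|U|\le 1$, where $\cone(\Sint[U])$ has no edges and the Lagrangian is trivially $0$. I therefore expect no real obstacle in this step.
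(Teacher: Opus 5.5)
Your proposal is correct and follows essentially the same route as the paper's proof: take $U=V(H)\cap B$, note that $H$ sits inside $\cone(\Sint[U])$ (adding an unused apex if $v_0\notin V(H)$), check sparsity and $\Delta_2\le 2$ via Lemma~\ref{lem:code}, and conclude with Theorem~\ref{thm:local-cone} and monotonicity of the Lagrangian. Your explicit identification $G[U\cup\{v_0\}]=\cone(\Sint[U])$ and your treatment of the degenerate cases are fine additions that the paper leaves implicit.
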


\begin{proof}
Let $U \coloneqq V(H)\cap B$. Then $|U|\le m$. By Lemma~\ref{lem:code}, the induced subgraph $\Sint[U]$ is sparse and satisfies $\Delta_2(\Sint[U])\le2$.

If $v_0\in V(H)$, then $H\subseteq \cone(\Sint[U])$. If $v_0\notin V(H)$, then $H\subseteq \Sint[U]$, and after adding an unused apex vertex we again have $H\subseteq \cone(\Sint[U])$. By Theorem~\ref{thm:local-cone}, $\La(\cone(\Sint[U]))\le\frac49$. Since normalized Lagrangian is monotone under adding edges and isolated vertices, it follows that $\La(H)\le4/9$.
\end{proof}

For every integer $m$, Claims~\ref{lem:global-lag} and~\ref{lem:small-subgraphs} give a finite $3$-graph $G$ such that $\La(G)>\frac49$ and $\La(H)\le\frac49$ for every subgraph $H\subseteq G$ with $v(H)\le m$. Lemma~\ref{lem:FR-criterion}, applied with $\alpha=4/9$, shows that $4/9$ is a non-jump.
\end{proof}
\section{Concluding remarks}\label{SEC:remark}
Figure~\ref{fig:numberline-jumps-nonjumps-2} gives a partial picture for jumps and non-jumps of $3$-graphs. The cyan intervals are known jumps, the red points and interval are parts of known non-jumps, and the gray portions indicate regions where the picture is largely open.

On the jump side, Erd\H{o}s~\cite{Erdos64} proved that every number in $[0,2/9)$ is a jump, and Baber--Talbot~\cite{BaberTalbot11} proved the jump intervals $[0.2299,0.2316)$ and $[0.2871,8/27)$.

On the non-jump side, the figure marks $4/9$ from this paper, $6(5\sqrt5-2)/121$ from Shaw~\cite{Shaw2025MinimalNonJumps}, $5/9$ and selected values (part of the sequence $\beta_\ell=1-3/\ell+5/\ell^2$) from Frankl--Peng--R\"odl--Talbot~\cite{FranklPengRodlTalbot07}, $12/25$ from Yan--Peng~\cite{YanPeng23}, and the values $3/4$ and $64/81$ from Komorech~\cite{Komorech2025NonJumps}. It also marks selected values from the original Frankl--R\"odl~\cite{FranklRodl84} sequence $1-\frac1{k^2}$ for $k>6$, together with an interval $[1-\delta,1)$ from Liu--Pikhurko~\cite{LiuPikhurkoIntervals}.
The selected Yan--Peng~\cite{YanPeng23} values beyond $12/25$ come from the sequence
\[
        \alpha_k=
        \frac{2k-6k^3+4k^4-k\sqrt{4k-1}+4k^2\sqrt{4k-1}}{(2k^2+1)^2},
        \qquad k\ge2,
\]
which consists of non-jumps for $3$-graphs. Finally, the point $7/12$ is the accumulation point of Peng's~\cite{Peng09Substructure} sequence of non-jumps.

\begin{figure}[H]
\centering
\begin{tikzpicture}[x=11.5cm,y=1cm,>=latex]
\colorlet{jumpcolor}{cyan!80!black}
\colorlet{nonjumpcolor}{red!80!black}
\def\dotr{1pt}
\def\intervallw{3pt}
\def\figlabelsize{\tiny}
\def\upperlabely{0.62}
\def\lowerlabely{-0.48}

\pgfmathsetmacro{\erdos}{2/9}
\pgfmathsetmacro{\our}{4/9}
\pgfmathsetmacro{\shaw}{6*(5*sqrt(5)-2)/121}
\pgfmathsetmacro{\fprt}{5/9}
\pgfmathsetmacro{\pengacc}{7/12}
\pgfmathsetmacro{\ypbase}{12/25}
\pgfmathsetmacro{\komthreeq}{3/4}
\pgfmathsetmacro{\komsixtyfour}{64/81}
\pgfmathsetmacro{\btoneleft}{0.2299}
\pgfmathsetmacro{\btoneright}{0.2316}
\pgfmathsetmacro{\bttwoleft}{0.2871}
\pgfmathsetmacro{\bttworight}{8/27}
\pgfmathsetmacro{\lp}{0.95}

\draw[gray!25,line width=\intervallw] (\erdos,0) -- (\our,0);
\draw[gray!25,line width=\intervallw] (\our,0) -- (\lp,0);
\draw[red!70!black,line width=\intervallw] (\lp,0) -- (1,0);

\draw[thick,->] (0,0) -- (1.03,0);

\draw[jumpcolor,line width=\intervallw] (0,0) -- (\erdos,0);
\draw[jumpcolor,line width=\intervallw] (\btoneleft,0) -- (\btoneright,0);
\draw[jumpcolor,line width=\intervallw] (\bttwoleft,0) -- (\bttworight,0);

\foreach \x/\lab in {
  0/{0},
  \erdos/{$\frac{2}{9}$},
  1/{1}
}{
  \draw (\x,0.03) -- (\x,-0.03);
  \node[below=3pt] at (\x,-0.03) {\figlabelsize \lab};
}

\node[jumpcolor,align=center] at (0.310,-0.74)
  {\figlabelsize $[0.2299,0.2316)$, $[0.2871,8/27)$};
\draw[jumpcolor,->] (0.258,-0.56) -- ({(\btoneleft+\btoneright)/2},-0.09);
\draw[jumpcolor,->] (0.362,-0.56) -- ({(\bttwoleft+\bttworight)/2},-0.09);

\fill[nonjumpcolor] (\our,0) circle (\dotr);
\node[align=center,text=nonjumpcolor] at (0.395,\upperlabely)
  {\figlabelsize $\frac{4}{9}$};
\draw[nonjumpcolor,->] (0.412,0.46) -- (\our,0.08);

\fill[nonjumpcolor] (\shaw,0) circle (\dotr);
\node[align=center,text=nonjumpcolor] at (0.540,\upperlabely)
  {\figlabelsize $\frac{6}{121}(5\sqrt5-2)$};
\draw[nonjumpcolor,->] (0.515,0.46) -- (\shaw,0.08);

\fill[nonjumpcolor] (\fprt,0) circle (\dotr);
\node[align=center,text=nonjumpcolor] at (0.605,\lowerlabely)
  {\figlabelsize $\frac{5}{9}$};
\draw[nonjumpcolor,->] (0.596,-0.32) -- (\fprt,-0.08);
\foreach \fprtl in {15,16,18,20,25,35}{
  \pgfmathsetmacro{\fprtpoint}{1 - 3/\fprtl + 5/(\fprtl*\fprtl)}
  \fill[nonjumpcolor] (\fprtpoint,0) circle (\dotr);
}

\fill[nonjumpcolor] (\pengacc,0) circle (\dotr);
\node[align=center,text=nonjumpcolor] at (0.660,\upperlabely)
  {\figlabelsize $\frac{7}{12}$};
\draw[nonjumpcolor,->] (0.638,0.46) -- (\pengacc,0.08);

\fill[nonjumpcolor] (\ypbase,0) circle (\dotr);
\node[align=center,text=nonjumpcolor] at (0.505,\lowerlabely)
  {\figlabelsize $\frac{12}{25}$};
\draw[nonjumpcolor,->] (0.500,-0.32) -- (\ypbase,-0.08);
\foreach \ypoint in {0.704204,0.768556,0.808429,0.835869,0.856038,0.871551,0.883890,0.893958,0.902343,0.909442,0.915537,0.920831,0.925474,0.929582,0.933243,0.936530,0.939496,0.942188,0.944642,0.946890,0.948956}{
  \fill[nonjumpcolor] (\ypoint,0) circle (\dotr);
}

\fill[nonjumpcolor] (\komthreeq,0) circle (\dotr);
\fill[nonjumpcolor] (\komsixtyfour,0) circle (\dotr);
\node[align=center,text=nonjumpcolor] at (0.760,\upperlabely)
  {\figlabelsize $\frac{3}{4},\ \frac{64}{81}$};
\draw[nonjumpcolor,->] (0.745,0.46) -- (\komthreeq,0.08);
\draw[nonjumpcolor,->] (0.782,0.46) -- (\komsixtyfour,0.08);

\node[align=center,text=nonjumpcolor] at (0.956,0.42)
  {\figlabelsize $[1-\delta,1)$};
\draw[nonjumpcolor,->] (0.956,0.26) -- (0.970,0.12);

\end{tikzpicture}
\caption{Current partial picture for jumps and non-jumps of $3$-graphs.}
\label{fig:numberline-jumps-nonjumps-2}
\end{figure}
\section*{Acknowledgments}

Xizhi Liu was supported by the Excellent Young Talents Program (Overseas) of the National Natural Science Foundation of China. Dhruv Mubayi's research was partially supported by NSF Award DMS-2153576.

\section*{Declaration on the use of generative AI}

All of the conceptual ideas for the construction in this paper were obtained by the authors in early 2020. The two missing ingredients were a construction of large girth designs, provided recently via~\cite{DelcourtPostle24}, and a computation of the Lagrangian (Theorem~\ref{thm:local-cone}). The latter was provided by generative AI tools and checked by the authors.

\bibliographystyle{abbrv}
\bibliography{Turan}
\appendix
\section*{Auxiliary proofs}

\begin{proof}[Proof of Lemma~\ref{lem:FR-criterion}]
We use the standard local formulation of jumps, due to Erd\H{o}s and used by Frankl and R\"odl~\cite{FranklRodl84}.

Suppose for a contradiction that $\alpha$ is a jump. In the standard local form of the jump property, this means that there exists $c>0$ such that, for every $\varepsilon>0$ and every integer $M\ge3$, every sufficiently large $3$-graph of density at least $\alpha+\varepsilon$ contains an $M$-vertex subgraph of density at least $\alpha+c$.

Choose an integer $M\ge3$ large enough that
\[
        \alpha\frac{M^2}{(M-1)(M-2)}<\alpha+c.
\]
By the hypothesis, there is a finite $3$-graph $G_M$ such that $\La(G_M)>\alpha$ and every subgraph of $G_M$ on at most $M$ vertices has normalized Lagrangian at most $\alpha$. Choose $\eta>0$ such that
\[
        \alpha+2\eta<\La(G_M).
\]
By the definition of normalized Lagrangian, there is a probability vector $\mathbf{x}=(x_v)_{v\in V(G_M)}$ such that
\[
        p_{G_M}(\mathbf{x})>\alpha+2\eta.
\]
Taking a sufficiently large blow-up of $G_M$ with part sizes asymptotic to the weights $x_v$, we obtain a $3$-graph $B$ of density at least $\alpha+\eta$. Indeed, if the blow-up has part sizes $n_v$ with $n_v/N\to x_v$, then its edge density tends to $p_{G_M}(\mathbf{x})$.

By the jump property, provided $B$ is sufficiently large, it contains a set $W$ of $M$ vertices such that
\[
        |B[W]| \ge (\alpha+c)\binom M3.
\]
Let $V_i$ be the blow-up part corresponding to $i\in V(G_M)$, and set
\[
        I \coloneqq \left\{i\in V(G_M) \colon W\cap V_i\ne\emptyset \right\},
        \qquad
        m_i \coloneqq |W\cap V_i|.
\]
Put $\mathbf{y}=(y_i)_{i\in I}$, where $y_i=m_i/M$. Then $\sum_{i\in I}y_i=1$, and
\[
        |B[W]|
        \le
        \sum_{ijk\in G_M[I]}m_i m_j m_k
        =
        \frac{M^3}{6}p_{G_M[I]}(\mathbf{y})
        \le
        \frac{M^3}{6}\La(G_M[I])
        \le
        \frac{\alpha M^3}{6}.
\]
Dividing by $\binom M3=M(M-1)(M-2)/6$, we obtain
\[
        \frac{|B[W]|}{\binom M3}
        \le
        \alpha\frac{M^2}{(M-1)(M-2)}
        <\alpha+c,
\]
contradicting the choice of $W$. Hence $\alpha$ is a non-jump.
\end{proof}

\begin{proof}[Proof of Lemma~\ref{lem:tau-threshold}]
Recall that $\Phi(b,\mathbf{z})=3(1-b)b^2(1-\rho)+6b^3q$. Put $c \coloneqq 1-\rho$. Then
\[
        \Phi(b,\mathbf{z})=3cb^2(1-b)+6qb^3.
\]

First suppose that $\rho\le5/9$, so $c\ge4/9$. In this range, since $c\ge4/9$ implies $\sqrt c\ge2/3$, we have $\tau(\rho)=\frac{c(1-\sqrt c)}2\le\frac c6$. Hence $q\le c/6$. Let
\[
        f(b) \coloneqq 3cb^2(1-b)+6qb^3.
\]
Then
\[
        f'(b)=3b\bigl(2c-3(c-2q)b\bigr).
\]
Since $q\le c/6$, the critical point $b_0=\frac{2c}{3(c-2q)}$ lies in $[0,1]$. Thus the maximum of $f$ on $[0,1]$ is attained at $b_0$, and a direct substitution gives
\[
        f(b_0)=\frac{4c^3}{9(c-2q)^2}.
\]
Therefore $f(b_0)\le4/9$ is equivalent to $c^{3/2}\le c-2q$,  or equivalently
\[
        q\le\frac{c(1-\sqrt c)}2=\tau(\rho).
\]
This holds by hypothesis.

Now suppose that $\rho\ge5/9$, so $c\le4/9$. In this range $q\le\tau(\rho)=2/27$. If $q\le c/6$, then
\[
        \Phi(b,\mathbf{z})
        \le 3cb^2(1-b)+cb^3
        =c(3b^2-2b^3)
        \le c
        \le\frac49.
\]
It remains to consider the case $q\ge c/6$. Note that $\Phi'(b)=3b\bigl(2c-3(c-2q)b\bigr)$. If $c-2q\ge0$, then for $0\le b\le1$, we have
\[
        2c-3(c-2q)b\ge 2c-3(c-2q)=6q-c\ge0.
\]
If $c-2q<0$, then
\[
        2c-3(c-2q)b\ge 2c\ge0.
\]
Thus $\Phi'(b)\ge0$ on $[0,1]$. Hence $\Phi(b,\mathbf{z})$ is increasing on $[0,1]$, and therefore
\[
        \Phi(b,\mathbf{z})\le\Phi(1,\mathbf{z})=6q\le\frac49.
\]
This proves the lemma.
\end{proof}

\end{document}